
\documentclass{amsart}


\textwidth 6.5truein
\textheight 8.5truein
\oddsidemargin 0pt
\evensidemargin 0pt

\usepackage{amssymb, amsmath}
\usepackage{mathrsfs}
\usepackage{amscd}
\usepackage[active]{srcltx}
\usepackage{verbatim}
\usepackage{graphicx}
\usepackage[colorlinks,linkcolor={blue},citecolor={blue},urlcolor={red},]{hyperref}

\usepackage[fleqn,tbtags]{mathtools}
\mathtoolsset{showonlyrefs,showmanualtags}

\usepackage{newsymbol}

\let\mathcal \undefined
\def\mathcal{\mathscr}

\let\emptyset \undefined
\let\ge       \undefined
\let\le       \undefined
\newsymbol\le          1336  
\newsymbol\ge          133E  \let\geq\ge
\newsymbol\emptyset    203F
\newsymbol\notle       230A
\newsymbol\notge       230B


\theoremstyle{plain}
\newtheorem{theorem}{Theorem}[section]
\theoremstyle{remark}
\newtheorem{remark}[theorem]{Remark}

\newtheorem{facts}[theorem]{Facts}
\theoremstyle{plain}
\newtheorem{corollary}[theorem]{Corollary}
\newtheorem{lemma}[theorem]{Lemma}

\newtheorem{definition}[theorem]{Definition}

\numberwithin{equation}{section}


\def\R{{\mathbb R}}
\def\C{{\mathbb C}}


\renewcommand{\a}{\alpha}
\renewcommand{\b}{\beta}

\newcommand{\W}{\mathscr{W}}

\renewcommand{\Re}{\hbox{\rm Re}\,}
\renewcommand{\Im}{\hbox{\rm Im}\,}
\newcommand{\calL}{\mathscr{L}}
\newcommand{\n}{\Vert}
\newcommand{\s}{^*}

\newcommand{\wh}{\widehat}
\newcommand{\ud}{\,{\rm d}}
\newcommand{\dd}{{\rm d}}
\newcommand{\calF}{\mathscr{F}}



\allowdisplaybreaks 

\begin{document}

 \title[Weyl calculus and restricted $L^p$-$L^q$ boundedness]
 {Weyl calculus with respect to the Gaussian measure \\ and restricted 
 $L^p$-$L^q$ boundedness of the \\ Ornstein-Uhlenbeck semigroup in complex time}

\author{Jan van Neerven}

\address{Delft Institute of Applied Mathematics\\
Delft University of Technology\\P.O. Box 5031\\2600 GA Delft\\The Netherlands}
\email{J.M.A.M.vanNeerven@tudelft.nl}

\author{Pierre Portal}

\address{The Australian National University, 
Mathematical Sciences Institute, John Dedman Building, 
Acton ACT 0200, Australia, and Universit\'e Lille 1,
Laboratoire Paul Painlev\'e, F-59655 Villeneuve d'Ascq, France.}
\email{Pierre.Portal@anu.edu.au}     

\date{\today}

\keywords{Weyl functional calculus, canonical commutation relation, Schur estimate, 
 Ornstein-Uhlenbeck operator, Mehler kernel, restricted $L^p$-$L^q$-boundedness, restricted Sobolev embedding}
 
\subjclass[2010]{Primary: 47A60; Secondary: 47D06, 47G30, 60H07, 81S05}

\thanks{The authors gratefully acknowledge financial support by the ARC discovery Grant DP 160100941}

\begin{abstract} In this paper, we introduce a Weyl functional calculus $a 
\mapsto a(Q,P)$ 
for the position and momentum operators $Q$ and $P$ associated with the 
Ornstein-Uhlenbeck operator
$ L =  -\Delta  + x\cdot \nabla$, and give a simple criterion for restricted 
$L^p$-$L^q$ boundedness of operators in this functional calculus. The analysis of 
this non-commutative functional calculus is simpler than the analysis of the 
functional calculus of $L$. It allows us to recover, unify, and 
extend, old and new results concerning the boundedness of 
$\exp(-zL)$ as an operator from 
$L^p(\R^d,\gamma_{\a})$ to $L^q(\R^d,\gamma_{\b})$f
for suitable values of $z\in \C$ with $\Re z>0$, $p,q\in [1,\infty)$, and
$\a,\b>0$. Here, $\gamma_\tau$ denotes the centred Gaussian measure on $\R^d$
with density $(2\pi\tau)^{-d/2}\exp(-|x|^2/2\tau)$.
\end{abstract}

\maketitle

\section{Introduction}\label{sec:introduction}
In the standard euclidean situation, 
pseudo-differential calculus arises as the Weyl joint functional calculus of a 
non-commuting pair of operators: the position and momentum operators (see, e.g., 
\cite{Sch} and 
\cite[Chapter XII]{Stein}). By 
transferring this calculus to the Gaussian setting,
in this paper we introduce a Gaussian 
version of the Weyl pseudo-differential calculus which assigns to suitable 
functions 
$a:\R^d\times \R^d \to \C$ a bounded operator $a(Q,P)$ acting on $L^2(\R^d,\gamma)$. 
Here,
$Q = (Q_1,\dots,Q_d)$ and $P= (P_1,\dots,P_d)$ are the position  and momentum 
operators 
associated with the Ornstein-Uhlenbeck operator $$L = -\Delta  + x\cdot 
\nabla$$ 
on $L^2(\R^d,\gamma)$, where ${\rm{d}}\gamma(x) = 
(2\pi)^{-d/2}\exp(-\tfrac12|x|^2)\ud x$ is the standard Gaussian measure
on $\R^d$. 
We show that the Ornstein-Uhlenbeck semigroup $\exp(-tL)$ can be expressed 
in terms of this calculus by the formula
\begin{align} \label{timechange} \exp(-tL) = 
\Bigl(1+\frac{1-e^{-t}}{1+e^{-t}}\Bigr)^d\exp\Bigl( -\frac{1-e^{-t}}{1+e^{-t}}(P^2+Q^2) 
\Bigr).\end{align}
With $s:=\frac{1-e^{-t}}{1+e^{-t}}$, the expression on the right-hand side 
is defined through the Weyl calculus as
$(1+s)^d a_s(Q,P)$, where $a_s(x,\xi) = \exp(-s(|x|^2 + |\xi|^2))$. 
The main ingredient in the proof of \eqref{timechange} 
is the explicit determination of the integral 
kernel for $a_s(Q,P)$. By applying a
Schur type estimate to this kernel we are able to prove the following 
sufficient condition for restricted $L^p$-$L^q$-boundedness of $a_s(Q,P)$:

\begin{theorem}
\label{thm:2}
Let $p,q\in [1, \infty)$
and let 
$\a,\b>0$.
For $s\in \C$ with $\Re s >0$, define $r_\pm(s):= \frac12\Re(\frac1s\pm s)$. If 
$s$ satisfies $1-\frac{2}{\a p}+  r_+(s)>0$, $\frac{2}{\b q}-1 + r_+(s)>0$,  
and 
\begin{align*}
(r_-(s))^2 \le  \bigl(1-\frac{2}{\a p} + r_+(s)\bigr)\bigl(\frac{2}{\b q}-1+ 
r_+(s)\bigr), 
\end{align*}
then the operator
$\exp(-s(P^2+Q^2))$ is bounded from $L^p(\R^d,\gamma_{\a})$ to 
$L^{q}(\R^d,\gamma_{\b})$.
\end{theorem}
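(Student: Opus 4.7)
The plan is to realize $\exp(-s(P^2+Q^2))=a_s(Q,P)$ as an integral operator with Gaussian kernel and to reduce the $L^p(\gamma_\alpha)\to L^q(\gamma_\beta)$ boundedness to a weighted Schur-type estimate for this kernel. From the explicit kernel formula for $a_s(Q,P)$ derived in the section on the Weyl calculus, $a_s(Q,P)$ acts as an integral operator against Lebesgue measure whose kernel $K_s(x,y)$ has modulus proportional to
\begin{equation*}
\exp\Bigl(-\tfrac12 r_+(s)(|x|^2+|y|^2) + r_-(s)\,x\cdot y\Bigr).
\end{equation*}
Conjugating by the natural isometries between the weighted and Lebesgue $L^p$-spaces (multiplication by appropriate powers of the densities of $\gamma_\alpha$ and $\gamma_\beta$), the problem becomes $L^p(\R^d)\to L^q(\R^d)$ boundedness of an integral operator whose kernel modulus is again Gaussian, now with the $|x|^2$- and $|y|^2$-coefficients shifted by terms involving $\tfrac{1}{\beta q}$ and $\tfrac{1}{\alpha p}$ respectively, while the cross term $r_-(s)\,x\cdot y$ is preserved.

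I would then apply a Schur test with Gaussian trial weights $\phi_i(z)=e^{-\lambda_i|z|^2}$, raised to the H\"older exponents dictated by the $L^p$-$L^q$ norms. Each of the two Schur inequalities then becomes a Gaussian integrability condition together with a matching relation on the cross term: convergence of the Gaussian integrals forces positivity of the leading quadratic coefficients after the addition of the $\lambda_i$, while the optimal compatibility of the two inequalities at a common choice of $(\lambda_1,\lambda_2)$ is constrained by the cross coefficient $r_-(s)$. Completing the square, the two linear conditions $1-\tfrac{2}{\alpha p}+r_+(s)>0$ and $\tfrac{2}{\beta q}-1+r_+(s)>0$ appear as positivity of the diagonal entries of a $2\times 2$ symmetric form, while the quadratic condition $r_-(s)^2\le(1-\tfrac{2}{\alpha p}+r_+(s))(\tfrac{2}{\beta q}-1+r_+(s))$ appears as the non-negativity of its determinant.

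The main obstacle, beyond routine Gaussian bookkeeping, is selecting the correct variant of the Schur test for $p\ne q$: the classical $L^2$-$L^2$ test must be adapted (for instance via a Howard-Schep-type inequality) to the weighted $L^p$-$L^q$ setting, and the H\"older exponents must be distributed so that the exponents $\tfrac{2}{\alpha p}$ and $\tfrac{2}{\beta q}$ emerge correctly, with the ``$1$'' in the linear conditions arising from the quadratic exponent of the reference Gaussian density $(2\pi)^{-d/2}e^{-|x|^2/2}$ normalized against $\gamma_\alpha,\gamma_\beta$. Once the ansatz is in place and the weights are correctly distributed, the verification reduces to a $2\times 2$ positive-semidefinite check, with the extremal case of the hypotheses corresponding to a degenerate rank-one form.
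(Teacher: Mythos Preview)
Your approach is essentially the paper's: write $a_s(Q,P)$ as an integral operator with an explicit Gaussian kernel and apply a weighted $L^p$--$L^q$ Schur test. Two points of comparison are worth noting.

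First, your stated kernel modulus is slightly off. The kernel $K_{a_s}(y,x)$ is \emph{not} symmetric in $x$ and $y$: besides the symmetric factor $\exp(-\Re b_s(|x|^2+|y|^2)+\Re c_s\,xy)$ with $\Re b_s=\tfrac14 r_+(s)-\tfrac14$ and $\Re c_s=\tfrac12 r_-(s)$, there is an extra $e^{-\frac12|x|^2}$ on the domain side, inherited from the reference Gaussian in the Weyl construction. This asymmetry is precisely what yields $1-\tfrac{2}{\alpha p}$ in one linear condition and $\tfrac{2}{\beta q}-1$ in the other; with your symmetric ansatz you would get the wrong constants.

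Second, the paper avoids the optimization over Gaussian trial weights you propose. Its Schur lemma (an $L^p$--$L^q$ version proved by Riesz--Thorin interpolation between the endpoints $L^{r'}\to L^\infty$ and $L^1\to L^r$, with $\tfrac1r=1-(\tfrac1p-\tfrac1q)$) is applied directly with $\phi=\dd\gamma_\alpha/\dd x$ and $\psi=\dd\gamma_\beta/\dd x$ as the weights --- no free parameters $\lambda_i$. Each of the two Schur integrals is then a single Gaussian integral in one variable; taking the supremum in the remaining variable is finite \emph{exactly} when the stated quadratic inequality holds, so the hypotheses drop out without a $2\times 2$ semidefiniteness optimization. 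Your route would also succeed, but the extra conjugation and parameter search are not needed.
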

Here, $\gamma_\tau$ denotes the centred Gaussian measure on $\R^d$
with density $(2\pi\tau)^{-d/2}\exp(-|x|^2/2\tau)$ (so that $\gamma_1 = \gamma$ is the standard Gaussian measure).
The proof of the theorem provides an explicit estimate for the norm of this 
operator 
that is of the correct order in the variable $s$, as subsequent corollaries 
show.

Taken together, \eqref{timechange} and Theorem \ref{thm:2}  can then be used to obtain 
criteria for  $L^p(\R^d,\gamma_{\a})$-$L^{q}(\R^d,\gamma_{\b})$ boundedness of $\exp(-zL)$ for suitable values of
$z\in \C$ with $\Re z>0$.
Among other things, in Section \ref{sec:bbg} we show that the operators 
$\exp(-zL)$ 
map  
$L^{1}(\R^{d}; \gamma_{2})$ to $L^{2}(\R^{d};\gamma)$ for all $\Re z>0$.
We also  prove a more precise
boundedness result which, for real values $t>0$, implies the boundedness of 
$\exp(-tL)$ from 
$L^1(\R^d,\gamma_{\a_t})$ to $L^2(\R^d,\gamma)$, where $\a_t = 1+e^{-2t}$.
The boundedness of these operators was proved recently by Bakry, Bolley, and 
Gentil \cite{bbg} as a corollary of their work on 
hypercontractive bounds on Markov kernels for diffusion semigroups. As such, our 
results may be interpreted as giving an
extension to complex time of the Bakry-Bolley-Gentil result for the 
Ornstein-Uhlenbeck semigroup.

In the final Section \ref{sec:Epp} we show that Theorem \ref{thm:2} also 
captures the well-known result of Epperson \cite{Epp} (see also Weissler \cite{Wei} for
the first boundedness result of this kind, and part of the contractivity result)
for $1<p\le q<\infty$, the operator $\exp(-zL)$ is bounded
from $L^p(\R^d,\gamma)$ to $L^q(\R^d,\gamma)$ if and only if $\omega:= e^{-z}$ 
satisfies
$ |\omega|^2 < p/q$  and 
\begin{equation}\label{eq:Epp2}
 (q-1)|\omega|^4 + (2-p-q) (\Re\omega)^2 - (2-p-q+pq)(\Im\omega)^2 + p-1 > 0.
\end{equation}
In particular, for $p=q$ the semigroup $\exp(-tL)$ on $L^p(\R^d,\gamma)$
extends analytically to the set (see Figure 1) 
\begin{align}\label{eq:Epp}E_p := \{z = x+iy\in\C: \, |\sin(y)| < 
\tan(\theta_p)\sinh(x)\},
\end{align}
where 
\begin{align}\label{eq:theta-p}
\cos \phi_p = \Big|\frac2p -1\Big|.
\end{align}
 
 \begin{center}
 \begin{figure}[ht]\label{fig:Epp}
  \includegraphics[scale=0.5]{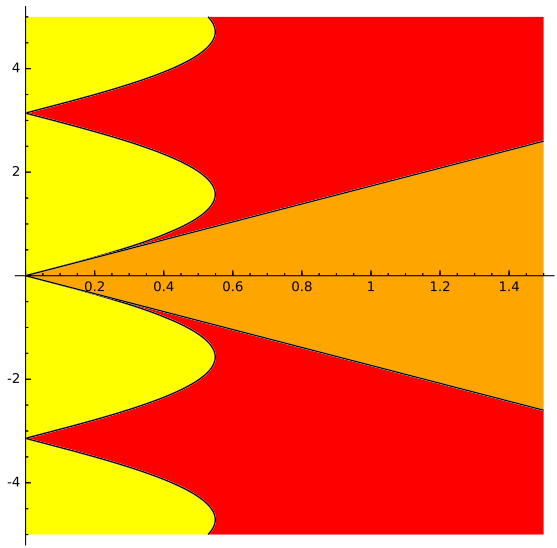}
 
 \caption{The Epperson region $E_p$ (red/orange) and the sector with angle 
 $\theta_p$ for $p = 4/3$ (orange).}
 \end{figure}
 \end{center}


These results demonstrate the potential of the Gaussian 
pseudo-differential calculus. Of course, taking \eqref{timechange} for granted, 
we could forget about the Gaussian pseudo-differential calculus 
altogether, and reinterpret all the applications given in this paper as 
consequences of the realisation that
through the time change $s \mapsto 
\frac{1-e^{-t}}{1+e^{-t}}$, various algebraic simplifications allow one to 
derive sharp results for the Ornstein-Uhlenbeck semigroup in a unified 
manner. 
In fact, as the referee of this paper pointed out to us, Weissler took exactly this approach in \cite{Wei}, and obtained the most important special case of our Theorem \ref{thm:2} in 1979. Besides generalising this result to the context of weighted Gaussian measures $\gamma_{\alpha}$ arising from \cite{bbg}, the point of the new approach given here is to connect results such as Weissler's, and other classical hypercontractivity theorems, to the underlying Weyl calculus. In doing so, one sees the 
reason why certain crucial algebraic simplifications occur, and one develops a far more flexible tool to study other 
spectral 
multipliers associated with the Ornstein-Uhlenbeck operator (and, possibly, 
perturbations 
thereof). In such applications, the algebraic consequences of the fact that the 
Weyl calculus involves non-commuting operators may not be as easily unpacked as 
in 
\eqref{timechange}. The $L^p$-analysis of operators in the Weyl calculus of the 
pair $(Q,P)$, 
however, is simpler than the direct analysis of operators in the functional 
calculus of $L$ (or 
perturbations of $L$). In future works, we plan to develop this theory and 
include harmonic analysis substantially more advanced than the Schur type 
estimate employed here, along 
with applications to non-linear stochastic differential equations.

\section*{Acknowledgements} 
We are grateful to the anonymous referee for her/his useful suggestions, and, in particular, for pointing out to us the paper \cite{Wei}.

\section{The Weyl calculus with respect to the Gaussian measure}\label{sec:WC}

In this section we introduce the Weyl calculus with respect 
to the Gaussian measure.
To emphasise its Fourier analytic content, our point of departure is the fact
that Fourier-Plancherel transform is unitarily equivalent to 
the second quantisation of multiplication by $-i$.
The unitary operator implementing this equivalence is used 
to define the position and momentum operators $Q$ and $P$ associated with the 
Ornstein-Uhlenbeck operator $L$. This approach bypasses the use of creation
and annihilation operators altogether and leads to the same expressions.

\subsection{The Wiener-Plancherel transform with respect to the Gaussian 
measure}\label{subsec:W}
Let $\!\ud m(x) = (2\pi)^{-d/2}\ud x$ denote the normalised Lebesgue measure on $\R^d$. 
The mapping $E: f\mapsto ef$, where 
$$e(x) := 
\exp(-\tfrac14|x|^2),$$ is unitary from 
$L^2(\R^d,\gamma)$ onto $L^2(\R^d,m)$,
and the dilation $\delta: L^2(\R^d,m)\to L^2(\R^d,m)$,
$$ \delta f(x) := (\sqrt 2)^{d} f\bigl({\sqrt 2}x\bigr)$$
is unitary on $L^2(\R^d,m)$.
Consequently the operator
$$ U:= \delta \circ E$$
is unitary from $L^2(\R^d,\gamma)$ onto $L^2(\R^d,m)$.
It was shown by Segal \cite[Theorem 2]{Seg} that $U$ establishes a unitary 
equivalence 
\begin{align}\label{eq:FW} 
\W  = U^{-1} \circ \calF \circ U
\end{align}
of the Fourier-Plancherel transform $\calF$ as a unitary operator on $L^2(\R^d,m)$,
\begin{align}\label{eq:FT} \calF f(y) := \wh f(y): =  \frac1{(2\pi)^{d/2}}\int_{\R^d} f(x) 
\exp(-ix\cdot y)\ud x  = \int_{\R^d} f(x) 
\exp(-ix\cdot y)\ud m(x),
\end{align}
with 
the unitary operator $\mathscr{W}$ on $L^2(\R^d,\gamma)$,  
defined for polynomials $f$ by
$$ \W f(y):= \int_{\R^d} f(-iy+\sqrt 2 x)\ud \gamma(x).
 $$
We have the following beautiful representation of this operator, which is
sometimes called the {\em Wiener-Plancherel transform}, in terms of the second 
quantisation functor $\Gamma$ \cite[Cor\-ollary 3.2]{Seg}: 
 $$\W = \Gamma(-i).$$
This identity is not used in the sequel, but it is stated only to demonstrate 
that both the operator $\W$ and the unitary $U$ are very natural. 

\subsection{Position and momentum with respect to the Gaussian 
measure}\label{subsec:PQ}

Consider classical position and momentum operators 
$$X=(x_1,\dots,x_d), \quad D = 
(\frac1i\partial_1,\dots,\frac1i\partial_d),$$
viewed as densely defined operators mapping from $L^2(\R^d)$ into $L^2(\R^d;\C^d)$. Explicitly, 
 $x_j$ is the densely defined self-adjoint operator  
on $L^2(\R^d)$ defined by pointwise multiplication, i.e., $(x_j f)(x):= x_j f(x)$ for $x\in \R^d$, with maximal domain $$\mathsf{D}(x_j) = \{f\in L^2(\R^d):\, x_j f\in L^2(\R^d)\},$$
and $\frac1i\partial_j$ is the self-adjoint operator $f\mapsto \frac1i\partial_h f$
with maximal domain $$\mathsf{D}(\frac1i\partial_j) = \{f\in L^2(\R^d):\, \partial_j f\in L^2(\R^d)\},$$
the partial derivative being interpreted in the sense of distributions.

Having motivated our choice of the unitary $U$, we now use it to introduce 
the position and momentum operators 
$Q=(q_1,\dots,q_d)$ and $P=(p_1,\dots,p_d)$
as densely defined closed operators acting from their natural domains in 
$L^2(\R^d,\gamma)$ into $L^2(\R^d,\gamma;\C^d)$ by unitary equivalence
with $X=(x_1,\dots,x_d)$ and $ D = 
(\frac1i\partial_1,\dots,\frac1i\partial_d)$:
\begin{align}\label{reason:3} 
 q_j & :=  U^{-1}\circ x_j \circ U,\\
 p_j & := U^{-1}\circ \frac1{i} \partial_j\circ U.
\end{align}
They satisfy the commutation relations 
\begin{equation}\label{reason:1} 
\begin{aligned}
 [p_j,p_k] = [q_j,q_k] & = 0, \quad   
 [q_j,p_k] = \frac1{i} \delta_{jk},
\end{aligned}
\end{equation}
as well as the identity
\begin{equation}\label{eq:L}
 \frac12(P^2+Q^2) = L +\frac{d}{2}I. 
\end{equation}
Here, 
$L$ is the {\em Ornstein-Uhlenbeck operator} 
which acts on test functions $f\in C_{\rm c}^2(\R^d)$  
by $$L f(x) := -\Delta f(x) + x\cdot \nabla f(x)\quad  (x \in \R^{d}).$$
It follows readily from the definition of the Wiener-Plancherel transform $\W$ 
that  
\begin{align*} q_j\circ \W 
= \W \circ p_j,\\
  p_j\circ \W 
  = -\W\circ  q_j
\end{align*}
consistent with the relations $x_j\circ  \calF = \calF\circ (\frac1i 
\partial_j)$
and  $(\frac1i \partial_j)\circ \calF = -\calF \circ x_j$ for position and 
momentum in the Euclidean setting.

\begin{remark} Our definitions of $P$ and $Q$ coincide 
with the physicist's definitions
in the theory of the quantum harmonic oscillator  
(cf. \cite{GK}). Other texts, such as \cite{Par}, use different 
normalisations. The present choice
makes the commutation relation between position and momentum as well as 
the identity relating the Ornstein-Uhlenbeck operator
and position and momentum come out right in the sense that \eqref{reason:1} and 
\eqref{eq:L} hold.
The former says that position and momentum satisfy the `canonical commutation 
relations'
and the latter says that the Hamiltonian $\frac12(P^2+Q^2)$ of 
the quantum harmonic oscillator  
equals the number operator $L$ (physicists would write $N$) plus the ground 
state energy 
$\frac{d}{2}$.
\end{remark}

\subsection{The Weyl calculus with respect to the Gaussian 
measure}\label{subsec:Weyl}

The Weyl calculus for the pair $(X,D)$ is defined, for 
Schwartz functions $a: \R^{2d}\to\C$, by  
\begin{equation}\label{eq:a1}
a(X,D)f(y)
    = \int_{\R^{2d}} \wh a(u,v) \exp(i(uX+vD))
f(y) \ud m(u)\ud m(v).
\end{equation}
Here $m(\dd x) = (2\pi)^{-d/2}\ud x$ as before, 
$\wh a := \calF a$ is the Fourier-Plancherel transform of $a$, 
and the unitary operators  $\exp(i(uX+v D))$ on $L^2(\R^d,\gamma)$ 
are defined through the action 
\begin{align}\label{eq:exppq} \exp(i(uX+v D))
f(y) := \exp(iuy + \tfrac12 iuv)f(v+y) 
\end{align}
(cf. \cite[Formula 51, page 550]{Stein}). This definition can be motivated by a 
formal application of the 
Baker-Campbell-Hausdorff formula to the (unbounded) operators $X$ and $D$; 
alternatively, one may look upon it as defining 
a unitary representation of the Heisenberg group 
encoding the commutation relations of $X$ and $D$, the 
so-called Schr\"odinger representation. 

Motivated by the constructions in the preceding subsection, we make the following definition.

\begin{definition}
 For  $u,v \in \R^d$,
on $L^2(\R^d,\gamma)$  we define the unitary operators 
 $\exp(i(uQ+v P))$ on $L^2(\R^d;\gamma)$ by
 $$ \exp(i(uQ+v P)) := U^{-1}\circ \exp(i(uX+v D))\circ U.$$
\end{definition}
This allows us to define, for Schwartz functions $a: \R^{2d}\to\C$, 
the bounded operator $a(Q,P)$ on $L^2(\R^d,\gamma)$ by
\begin{align}\label{eq:aPQ} a(Q,P) = U^{-1}\circ a(X,D)\circ U = \int_{\R^{2d}} \wh a(u,v) \exp(i(uQ+vP))
\ud m(u)\ud m(v),
\end{align}
the integral being understood in the strong sense.
An explicit expression for $a(Q,P)$ can be obtained as follows.
By \eqref{eq:exppq} and a change of variables one has
(cf. \cite[Formula (52), page 551]{Stein})
\begin{equation}\label{eq:a2}\begin{aligned}
a(X,D)f(y)
= \int_{\R^{2d}} 
a(\tfrac12(v+y),\xi)\exp(-i\xi(v-y))f(v)\ud m(v)\ud m(\xi).
\end{aligned}
\end{equation}
By \eqref{eq:aPQ} and the definition of $U$, this
gives the following explicit formula for the Gaussian setting:
\begin{equation}\label{eq:a-gauss}
\begin{aligned}
  a(Q,P)f(y) 
& =\frac1{(2\pi)^d}\int_{\R^{2d}}  a(\tfrac12(x+\frac{y}{\sqrt 
2}),\xi)\exp(-i\xi(x-\frac{y}{\sqrt{2}}
))\exp(-\tfrac12|x|^2+\tfrac14|y|^2)f(x\sqrt{2})\ud \xi\ud x
\\ & =\frac{1}{(2\sqrt 2\pi)^d}\int_{\R^{2d}}  a(\frac{x+y}{2\sqrt 
2},\xi)\exp(-i\xi(\frac{x-y}{\sqrt{2}}
))\exp(-\tfrac14|x|^2+\tfrac14|y|^2)f(x)\ud \xi\ud x
\\ & = \int _{\R^{d}} K_{a}(y,x)f(x)\ud x,
\end{aligned}
\end{equation}
where 
\begin{align}\label{eq:Ka} K_{a}(y,x) := \frac{1}{(2\sqrt 2\pi)^d}\exp(-\tfrac14|x|^2+\tfrac14|y|^2)\int_{\R^{d}}  
a(\frac{x+y}{2\sqrt 
2},\xi)\exp(-i\xi(\frac{x-y}{\sqrt{2}}))\ud 
\xi.
\end{align}

\section{Expressing the Ornstein-Uhlenbeck semigroup in the Weyl 
calculus}\label{sec:OU}

In order to translate results about the Weyl 
functional calculus of $(Q,P)$ into results regarding the functional calculus 
of 
$L$, we first need to relate these two calculi. This is done in the next 
theorem. It is the only place where we rely on the concrete expression of 
the Mehler kernel.

\begin{theorem}\label{thm:exptL} For all $t > 0$ we have, 
with $s:= \frac{1-e^{-t}}{1+e^{-t}}$,  
\begin{equation}\label{eq:OU-via-PQ}
 \exp(-tL) = 
(1+s)^d\,  a_{s}(Q,P),
\end{equation}
where $ a_s(x,\xi):= \exp(-s(|x|^2+|\xi|^2)).$ 
\end{theorem}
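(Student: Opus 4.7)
The plan is to compute the kernel $K_{a_s}(y,x)$ explicitly by carrying out the Gaussian Fourier integral in the $\xi$-variable in \eqref{eq:Ka}, and then compare the resulting integral operator to the Mehler representation of $\exp(-tL)$.

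First, starting from \eqref{eq:Ka} with $a_{s}(x,\xi)=\exp(-s(|x|^{2}+|\xi|^{2}))$, the factor $\exp(-s|\xi|^2)$ multiplied by the character $\exp(-i\xi\cdot(x-y)/\sqrt 2)$ gives a Gaussian Fourier integral which evaluates to $(\pi/s)^{d/2}\exp(-|x-y|^{2}/(8s))$ (valid for $\Re s>0$). Collecting prefactors, this yields
\begin{equation*}
K_{a_{s}}(y,x)=\frac{1}{(8\pi s)^{d/2}}\exp\Bigl(-\tfrac14|x|^{2}+\tfrac14|y|^{2}-\tfrac{s}{8}|x+y|^{2}-\tfrac{1}{8s}|x-y|^{2}\Bigr).
\end{equation*}

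Next I expand the exponent as a quadratic form in $(x,y)$ and express the coefficients in terms of $s+s^{-1}$ and $s^{-1}-s$. Using the time change $s=(1-e^{-t})/(1+e^{-t})$, the identities
\begin{equation*}
s+\tfrac{1}{s}=\frac{2(1+e^{-2t})}{1-e^{-2t}},\qquad \tfrac{1}{s}-s=\frac{4e^{-t}}{1-e^{-2t}},\qquad (1+s)^{2}(1-e^{-2t})=4s
\end{equation*}
allow me to rewrite the quadratic form as $-|x-e^{-t}y|^{2}/(2(1-e^{-2t}))$, which is exactly the exponent appearing in the Mehler kernel of $\exp(-tL)$ written with respect to Lebesgue measure.

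On the other hand, from the probabilistic Mehler formula $\exp(-tL)f(y)=\int f(e^{-t}y+\sqrt{1-e^{-2t}}\,z)\,\dd\gamma(z)$, a change of variables gives
\begin{equation*}
\exp(-tL)f(y)=\bigl(2\pi(1-e^{-2t})\bigr)^{-d/2}\int_{\R^{d}}\exp\Bigl(-\frac{|x-e^{-t}y|^{2}}{2(1-e^{-2t})}\Bigr)f(x)\,\dd x.
\end{equation*}
Comparing the two kernels reduces the proof to checking the single algebraic identity $(1+s)^{d}(8\pi s)^{-d/2}=\bigl(2\pi(1-e^{-2t})\bigr)^{-d/2}$, which is immediate from the third identity above.

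The only nontrivial step is the bookkeeping of the quadratic form: matching the coefficients of $|x|^{2}$, $|y|^{2}$ and $x\cdot y$ against those of the Mehler exponent requires the three identities listed, and it is here that the specific time change $s=(1-e^{-t})/(1+e^{-t})$ is forced upon us. I would present the kernel computation as a lemma and then write the verification of the quadratic form as a short direct calculation, since once the Fourier-Gaussian integral is performed, everything else is elementary algebra (and analytically the identity then extends from Schwartz $f$ to $L^{2}(\R^{d},\gamma)$ by density, both sides being bounded operators).
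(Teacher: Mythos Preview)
Your proof is correct and follows essentially the same route as the paper: compute $K_{a_s}$ by evaluating the Gaussian integral in $\xi$ from \eqref{eq:Ka}, expand the resulting quadratic form, substitute $s=\frac{1-e^{-t}}{1+e^{-t}}$, and identify the outcome with the Mehler kernel. The only cosmetic difference is that the paper rewrites the exponent as $-\tfrac{1}{8s}(1-s)^2(|x|^2+|y|^2)+\tfrac14(\tfrac1s-s)xy-\tfrac12|x|^2$ before substituting, whereas you organise the bookkeeping around the three identities for $s+s^{-1}$, $s^{-1}-s$ and $(1+s)^2(1-e^{-2t})$; the computations are equivalent.
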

In the next section we provide restricted $L^p$-$L^q$ estimates for $a_s(Q,P)$ 
for complex values of $s$ purely based on the Weyl calculus. 

We need an elementary calculus lemma 
which is proved by writing out the inner product and square norm in terms of 
coordinates, thus  
writing the integral as a product of $d$ integrals with respect to a single 
variable.

\begin{lemma}\label{lem:calc} For all $A>0$, $B\in\R$, and $y\in\R^d$,  
$$ \int_{\R^d} \exp(-A|y|^2 +Bxy)\ud x  = \bigl(\frac{\pi}{A}\bigr)^{d/2} 
\exp\bigl(\frac{B^2}{4A}|y|^2\bigr). 
$$ 
\end{lemma}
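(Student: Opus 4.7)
The plan is to reduce the $d$-dimensional Gaussian integral to a product of one-dimensional ones by separating the exponent in Cartesian coordinates, and then evaluate each factor by completing the square. Reading the integrand $\exp(-A|y|^2 + Bxy)$ as $\exp(-A|x|^2 + B\, x\cdot y)$ (the only sensible interpretation, since otherwise the integrand is constant in $x$ and the integral diverges), I first write, for $x=(x_1,\dots,x_d)$ and $y=(y_1,\dots,y_d)$,
\begin{equation*}
-A|x|^2 + B\, x\cdot y \;=\; \sum_{j=1}^{d}\bigl(-A x_j^2 + B\, x_j y_j\bigr).
\end{equation*}
Since the exponential of a sum is a product, and since $A>0$ ensures absolute integrability, Tonelli's theorem allows the $d$-fold integral to be rewritten as $\prod_{j=1}^{d}\int_{\R}\exp(-A x_j^2 + B x_j y_j)\ud x_j$.

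Next, for each fixed $j$ I complete the square in $x_j$:
\begin{equation*}
-A x_j^2 + B\, x_j y_j \;=\; -A\Bigl(x_j - \tfrac{B y_j}{2A}\Bigr)^2 + \tfrac{B^2 y_j^2}{4A}.
\end{equation*}
The constant factor $\exp(B^2 y_j^2/(4A))$ pulls out of the integral, and the translation $u = x_j - B y_j/(2A)$ reduces what remains to the classical one-dimensional Gaussian identity $\int_{\R}\exp(-A u^2)\ud u = \sqrt{\pi/A}$. Multiplying the $d$ resulting factors together yields $(\pi/A)^{d/2}\exp\bigl(\tfrac{B^2}{4A}\sum_{j=1}^{d} y_j^2\bigr) = (\pi/A)^{d/2}\exp\bigl(\tfrac{B^2}{4A}|y|^2\bigr)$, matching the right-hand side.

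There is no genuine obstacle here: the proof is entirely routine, which is precisely why the lemma is isolated from the main text. The one nontrivial input is the classical identity $\int_{\R} e^{-A u^2}\ud u = \sqrt{\pi/A}$ for $A>0$, which is standard and may be quoted without further justification. The purpose of the lemma is to package this bookkeeping into a single reusable identity, so that later calculations, in particular the evaluation of the integral kernel $K_{a_s}$ from \eqref{eq:Ka} needed in the proof of Theorem \ref{thm:exptL}, can invoke the result cleanly without repeatedly completing squares by hand.
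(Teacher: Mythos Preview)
Your proof is correct and follows exactly the approach the paper indicates: write the exponent in coordinates so the integral factorises into a product of one-dimensional Gaussians, then complete the square in each variable. Your observation that the integrand must be read as $\exp(-A|x|^2 + B\,x\cdot y)$ is also right; this is a typo in the statement.
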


\begin{proof}[Proof of Theorem \ref{thm:exptL}]
By \eqref{eq:Ka} we have
\begin{equation}\label{eq:Ka2}
\begin{aligned} 
K_{a_s}(y,x) & = \frac{1}{(2\sqrt 2\pi)^d}\exp(-\tfrac14|x|^2+\tfrac14|y|^2)
\int_{\R^{d}} 
\exp(-s(|\xi|^2+\tfrac18|x+y|^2))\exp(-i\xi(\frac{x-y}{\sqrt{2}}))\ud \xi 
\\ & =  \frac{1}{(2\sqrt 2\pi)^d}\exp(-\tfrac{s}{8}|x+y|^2)\exp(-\tfrac14|x|^2+\tfrac14|y|^2)
\int_{\R^d} \exp(-s(|\xi|^2 
+\tfrac{i}{s}\xi(\frac{x-y}{\sqrt 2}))) \ud \xi
\\ & =  \frac{1}{(2\sqrt 2\pi)^d}\exp(-\tfrac{1}{8s}|x-y|^2)\exp(-\tfrac{s}{8}
|x+y|^2)\exp(-\tfrac14|x|^2+\tfrac14|y|^2)
\int_{\R^d} \exp(-s|\eta|^2) \ud \eta
\\ & = \frac1{2^d(2\pi s)^{d/2}} \exp(-\tfrac{1}{8s}|x-y|^2) 
\exp(-\tfrac{s}{8}|x+y|^2)\exp(-\tfrac14|x|^2+\tfrac14|y|^2)
\\ & = \frac1{2^d(2\pi 
s)^{d/2}}\exp(-\tfrac{1}{8s}(1-s)^2(|x|^2+|y|^2)+\tfrac{1}{4}(\tfrac1s - 
s)xy)\exp(-\tfrac12|x|^2)
\end{aligned}
\end{equation}
and therefore 
\begin{equation}\label{eq:kernel}
\begin{aligned}
\exp(-s(P^2 + Q^2))f(y) &  = \int _{\R^{d}} K_{a_s}(y,x)f(x)\ud x
\\ & = \frac1{2^d (2\pi s)^{d/2}}
\int_{\R^d} \exp(-\tfrac{1}{8s}(1-s)^2(|x|^2+|y|^2)+\tfrac{1}{4}(\tfrac1s - s)xy) f(x)
e^{-1/2|x|^2}\ud x.
\end{aligned}
\end{equation}
Taking $s:= \frac{1-e^{-t}}{1+e^{-t}}$ in this identity we obtain
\begin{equation}\label{eq:mehler}
\begin{aligned}
 \ & \bigl(1+\frac{1-e^{-t}}{1+e^{-t}}\Bigr)^d\exp\Bigl( 
-\frac{1-e^{-t}}{1+e^{-t}}(P^2+Q^2) \Bigr)f(y)
\\ & \qquad = \frac1{(2\pi)^{d/2}} \Bigl(\frac{2}{1+e^{-t}}\Bigr)^d\frac1{2^d} 
\Bigl(\frac{1+e^{-t}}{1-e^{-t}}\Bigr)^{d/2} \\ & \qquad\qquad \times 
\int_{\R^d} 
\exp\Bigl(-\frac{1}{2}\frac{e^{-2t}}{1-e^{-2t}}(|x|^2+|y|^2)+{\frac{e^{-t}}{1-e^
{-2t}}}xy\Bigr)
f(x)\exp(-\tfrac12|x|^2)\ud x
\\ & \qquad = \frac1{(2\pi)^{d/2}}  \Bigl(\frac{1}{1-e^{-2t}}\Bigr)^{d/2}
\int_{\R^d} \exp\Bigl(-\frac12\dfrac{|e^{-t}y - x|^2}{1
      - e^{-2 t}}  \Bigr)f(x)\ud x
\\ & \qquad = \int_{\R^d} M_t(y,x)f(x)\ud x 
      \\ & \qquad = \exp(-tL)f(y),
\end{aligned}
\end{equation}
 where 
$$M_t(y,x) = \frac1{(2\pi)^{d/2}}  \Bigl(\frac{1}{1-e^{-2t}}\Bigr)^{d/2}
\exp\Bigl(-\frac12\dfrac{|e^{-t}y - x|^2}{1- e^{-2 t}}  \Bigr)
$$
denotes the Mehler kernel; 
the last step of \eqref{eq:mehler} uses the classical Mehler formula for $\exp(-tL)$. 
\end{proof}

For any $z\in \C$ with $\Re z>0$, the operator $\exp(-zL)$ is well defined and 
bounded as a 
linear operator on $L^2(\R^d,\gamma)$, and the same is true for the 
expression on the right-hand side in \eqref{eq:OU-via-PQ} by 
analytically extending the kernel defining it.
By uniqueness of analytic extensions, the identity \eqref{eq:OU-via-PQ} 
persists for complex time.

The identity \eqref{eq:OU-via-PQ}, extended analytically into the complex plane, 
admits the following deeper interpretation. 
The transformation 
\begin{equation}\label{eq:bihol} s =  \frac{1-e^{-z}}{1+e^{-z}},
\end{equation}
which is implicit in Theorem \ref{thm:exptL}, 
is bi-holomorphic from $$\{z\in \C:\,\Re z>0,\,|\Im(z)|< \pi\}$$
onto $$\{s\in \C: \ \Re s > 0, \ s\not\in [1,\infty)\}.$$ 
For $1<p<\infty$ it maps $E_p \cap\{z\in \C:\,|\Im(z)|< \pi\}$, where $E_p$ is the Epperson region
defined by \eqref{eq:Epp}, 
onto $\Sigma_{\theta_p}\setminus [1,\infty)$, where 
$\Sigma_{\theta_p} = \{s\in \C: \, s\not=0, |\arg(s)| < \theta_p\}$ is the open
sector with angle $\theta_p$ given by \eqref{eq:theta-p} 
(see Figure 1).
Using the periodicity modulo $2\pi i$ of the exponential function, the mapping \eqref{eq:bihol} 
maps $E_p$ onto $\Sigma_{\theta_p}\setminus\{1\}$.

Using this information, the analytic extendibility of the semigroup $\exp(-tL)$ on $L^p(\R^d,\gamma)$ 
to $E_p$ can now be proved by showing that that $\exp(-s(P^2+Q^2))$ extends 
analytically to $\Sigma_{\theta_p}$; the details are presented in Theorem \ref{thm:Epp}.
This shows that $\exp(-s(P^2+Q^2))$ is a much simpler object than 
$\exp(-zL)$. 

\begin{remark}
By \eqref{eq:a-gauss} and \eqref{eq:mehler}, the theorem can be interpreted as giving 
a representation of the Mehler kernel in terms of the variable $\frac{1-e^{-t}}{1+e^{-t}}$.
This representation could be taken as the starting point for the 
results in the next section without any reference to the Weyl calculus. 
As we already pointed out in the Introduction,
this would obscure the point that the Weyl calculus explains why 
the ensuing algebraic simplifications occur. What is more, the calculus 
 can be applied to other functions $a(x,\xi)$ beyond 
the special choice $a_s(\xi,x) = \exp(-s(|x|^2+|\xi|^2))$ and may serve as 
a tool to study spectral multipliers associated with the Ornstein-Uhlenbeck operator.
\end{remark}

\section{Restricted $L^p$-$L^q$ estimates for $\exp(-s(P^2+Q^2))$ }

Restricting the operators $\exp(-s(P^2+Q^2))$ to $C_{\rm c}^{\infty}(\R^{d})$,
we now take up the problem of determining when these restrictions extend to bounded operators 
from 
$L^p(\R^d,\gamma_{\a})$ into $L^q(\R^d,\gamma_{\b})$. Here, for $\tau>0$, we set
$${\rm d}\gamma_\tau(x) = (2\pi\tau)^{-d/2}\exp(-|x|^2/2\tau)\,{\rm d}x$$ 
(so that $\gamma_1 =\gamma$ is the standard Gaussian measure). Boundedness (or 
rather, contractivity)
from $L^p(\R^d,\gamma)$ to $L^q(\R^d,\gamma)$ corresponds to classical 
hypercontractivity of the Ornstein-Uhlenbeck semigroup.
For other values of $\a,\b>0$ this includes restricted ultracontractivity of the 
kind obtained in \cite{bbg}. 

We begin with a sufficient condition for 
$L^p(\R^d,\gamma_{\a})$-$L^q(\R^d,\gamma_{\b})$ boundedness (Theorem 
\ref{thm:main} below). 
Recalling that $\exp(-s(P^2+Q^2))$ equals the integral operator with kernel 
$K_{a_s}$ given by \eqref{eq:Ka2}, 
an immediate sufficient condition for boundedness derives from 
H\"older's inequality: if $p,q\in [1,\infty)$ and 
$\frac1p+\frac1{p'} =1$, and  
\begin{equation}\label{eq:Holder}
\int_{\R^{d}} \Bigl( \int_{\R^{d}} |K_a(y,x)|^{p'} \exp\bigl(\frac{p'}{2\a 
p}|x|^{2}\bigr)\ud x \Bigr)^{q/p'} \exp(-|y|^{2}/2\beta)\ud y =:C <\infty
\end{equation}
(with the obvious change if $p=1$)
then $a(Q,P)$ extends to a bounded operator from $L^{p}(\R^d,\gamma_{\a})$ to 
$L^{q}(\R^d,\gamma_{\b})$
with norm at most $C$. 
A much sharper criterion can be given by using the following Schur type estimate 
(which is 
a straightforward refinement of
\cite[Theorem 0.3.1]{Sogge}).

\begin{lemma}\label{lem:Schur}
Let $p,q,r \in [1,\infty)$ be such that 
$\frac{1}{r}=1-(\frac{1}{p}-\frac{1}{q})$. If 
$K\in L^1_{\rm loc}(\R^2)$ and $\phi,\psi: \R\to (0,\infty)$ 
are integrable functions 
such that  
$$ 
\sup_{y \in\R^d} \Bigl(\int_{\R^d}  
|K(y,x)|^{r}\frac{\psi^{r/q}(y)}{\phi^{r/p}(x)}\ud x\Bigr)^{{1}/{r}} 
=: C_1 <\infty,
$$
and 
$$
\sup_{x\in\R^d} 
\Bigl(\int_{\R^d}  |K(y,x)|^{r} \frac{\psi^{r/q}(y)}{\phi^{r/p}(x)}\ud 
y\Bigr)^{{1}/{r}} =:C_2<\infty
$$ 
then $$T_K f(y):= \int_\R K(y,x)f(x)\ud x \quad (f\in C_{\rm c}(\R))$$ 
defines a bounded operator $T_K$ from $L^{p}(\R^d,\phi(x)\ud x)$ to 
$L^{q}(\R^d,\psi(x)\ud x)$ with norm 
$$ \n T_K\n_{L^{p}(\R^d,\phi(x)\ud x),L^{q}(\R^d,\psi(x)\ud x)} \le 
C_1^{1-\frac{r}{q}} C_2^\frac{r}{q}.$$
\end{lemma}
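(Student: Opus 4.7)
The plan is to reduce the weighted estimate to an unweighted one by absorbing $\phi$ and $\psi$ into the kernel, and then to apply a three-term H\"older inequality whose exponents are dictated by the scaling relation $\frac{1}{r} = 1 - \frac{1}{p} + \frac{1}{q}$.

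First I would write $\tilde f(x) := \phi^{1/p}(x) f(x)$, so that $\n \tilde f\n_{L^p(\R^d)} = \n f\n_{L^p(\R^d,\phi(x)\ud x)}$, and introduce the modified kernel $\tilde K(y,x) := \psi^{1/q}(y)\phi^{-1/p}(x) K(y,x)$. Setting $\tilde T \tilde f(y) := \int_{\R^d} \tilde K(y,x) \tilde f(x)\ud x$, one checks that $\n T_K f\n_{L^q(\R^d,\psi(x)\ud x)} = \n \tilde T\tilde f\n_{L^q(\R^d)}$; moreover, since $|\tilde K|^r = |K|^r \psi^{r/q}/\phi^{r/p}$, the two hypotheses of the lemma become simply
\begin{equation*}
\sup_{y\in\R^d} \n \tilde K(y,\cdot)\n_{L^r(\R^d)} \le C_1, \qquad \sup_{x\in\R^d} \n \tilde K(\cdot,x)\n_{L^r(\R^d)} \le C_2.
\end{equation*}
The lemma thus reduces to the unweighted statement $\n \tilde T\n_{L^p\to L^q} \le C_1^{1-r/q} C_2^{r/q}$.

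Next I would apply three-term H\"older to the pointwise decomposition
\begin{equation*}
|\tilde K(y,x)\tilde f(x)| = \bigl(|\tilde K(y,x)|^r|\tilde f(x)|^p\bigr)^{1/q}\cdot|\tilde K(y,x)|^{1-r/q}\cdot|\tilde f(x)|^{1-p/q},
\end{equation*}
with exponents $q$, $rq/(q-r)$, and $pq/(q-p)$. Their reciprocals sum to $\frac{1}{q} + (\frac{1}{r}-\frac{1}{q}) + (\frac{1}{p}-\frac{1}{q}) = \frac{1}{r}+\frac{1}{p}-\frac{1}{q} = 1$, which is precisely the identity in the hypothesis; note also that the constraint $r\ge 1$ forces $p\le q$, ensuring these exponents are admissible (the degenerate cases $p=q$ and $p=1$ are handled by the usual convention that one of the factors becomes $1$, reducing to a two-term H\"older). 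After raising to the $q$-th power, and absorbing the $\sup_y$-bound into the second factor, this yields, for each $y$,
\begin{equation*}
|\tilde T\tilde f(y)|^q \le \Bigl(\int_{\R^d} |\tilde K(y,x)|^r |\tilde f(x)|^p \ud x\Bigr) \cdot C_1^{q-r} \cdot \n \tilde f\n_{L^p}^{q-p}.
\end{equation*}

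Finally I would integrate in $y$, apply Fubini, and invoke the $C_2$-bound $\int_{\R^d} |\tilde K(y,x)|^r \ud y \le C_2^r$ uniformly in $x$. This gives $\n \tilde T\tilde f\n_{L^q}^q \le C_1^{q-r} C_2^r \n \tilde f\n_{L^p}^q$, and taking $q$-th roots yields the asserted norm bound. The main obstacle, which drives the entire argument, is identifying the correct three-way split above; once this is in place the computation is purely algebraic, and the choice of exponents $q$, $rq/(q-r)$, $pq/(q-p)$ is exactly what makes the Young-type relation $\frac{1}{r}+\frac{1}{p} = 1+\frac{1}{q}$ intervene.
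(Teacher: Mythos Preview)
Your argument is correct and complete; the three-term H\"older split with exponents $q$, $rq/(q-r)$, $pq/(q-p)$ does exactly what is needed, and the edge cases $p=q$ and $p=1$ collapse to two-term H\"older as you note.

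However, your route differs from the one taken in the paper. There the proof proceeds by \emph{interpolation}: one first shows, via the $C_1$-bound and H\"older with exponent $r$, that $T_K$ maps $L^{r'}_{\phi^{1/p}}$ to $L^\infty_{\psi^{1/q}}$ with norm $\le C_1$; then, by applying the same reasoning to the adjoint kernel $K'(y,x)=\overline{K(x,y)}$ and dualising, one obtains that $T_K$ maps $L^{1}_{\phi^{1/p}}$ to $L^{r}_{\psi^{1/q}}$ with norm $\le C_2$. Riesz--Thorin with parameter $\theta=r/q$ interpolates between these two endpoint estimates and yields the constant $C_1^{1-r/q}C_2^{r/q}$. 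Your direct three-factor H\"older argument is more elementary in that it avoids any appeal to complex interpolation and produces the same sharp constant in a single stroke; it is essentially the classical ``Schur test'' proof of Young's inequality. The paper's interpolation approach, on the other hand, makes the provenance of the exponent $\theta=r/q$ transparent and shows how the two hypotheses correspond to distinct endpoint mapping properties of $T_K$, which can be useful if one later wants to relax one of them.
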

\begin{proof}
For strictly positive functions $\eta\in L^1(\R)$ denote by $L_\eta^s(\R)$ the 
Banach space of measurable functions $g$ such that $\eta g\in L^s(\R)$,
identifying two such functions $g$ if they are equal almost everywhere.
From 
\begin{align*}| T_K f(y)|  
& \le \int_{\R^d}  |K(y,x)||f(x)| \frac1{\phi^{1/p}(x)} \phi^{1/p}(x)\ud x
\\ & \le \Bigl(\int_{\R^d}  |K(y,x)|^r \frac1{\phi^{r/p}(x)} \ud x\Bigr)^{1/r}\n 
f\n_{L_{\phi^{1/p}}^{r'}(\R)}
\\ & =\frac1{\psi^{1/q}(y)}\Bigl(\int_{\R^d}  |K(y,x)|^r 
\frac{\psi^{r/q}(y)}{\phi^{r/p}(x)} \ud x\Bigr)^{1/r}\n 
f\n_{L_{\phi^{1/p}}^{r'}(\R)}
\end{align*}
we find that 
$$
\n T_K f\n_{L_{\psi^{1/q}}^\infty(\R)}  
\le C_1\n f\n_{L_{\phi^{1/p}}^{r'}(\R)}.
$$
This means that $$T_K:L_{\phi^{1/p}}^{r'}(\R)\to L_{\psi^{1/q}}^\infty(\R)$$
is bounded with norm at most $C_1$.
With $K'(y,x) := \overline{K(x,y)}$, the same argument gives that 
$T_K\s = T_{K'}$ extends to a bounded operator from 
$L_{(1/\psi)^{1/q}}^{r'}(\R)$ to $L_{(1/\phi)^{1/p}}^\infty(\R)$ with norm at 
most $C_2$.
Dualising, this implies that $$T_K: L^1_{\phi^{1/p}}(\R)\to 
L^r_{\psi^{1/q}}(\R)$$ is bounded with norm at most $C_2$.

This puts us into a position to apply the Riesz-Thorin theorem. Choose 
$0<\theta<1$ in such a way that 
$ \frac{1}{p} = \frac{1-\theta}{r'} + \frac{\theta}{1}$, that is, 
$\frac{\theta}{r} = \frac1p -(1 -\frac1r) = \frac1q$, 
so $\theta = \frac{r}{q}$. In view of $\frac{1}{q} = \frac{1-\theta}{\infty} + 
\frac{\theta}{r}$
it follows that $$T_K:L^p_{\phi^{1/p}}(\R)\to L_{\psi^{1/q}}^q(\R)$$ is bounded 
with norm at most 
$C = C_1^{1-\theta}C_2^\theta = C_1^{1-\frac{r}{q}} C_2^\frac{r}{q}$.
But this means that $$T_K: L^p(\R,\phi(x)\ud x)\to L^q(\R,\psi(x)\ud x)$$ is 
bounded with norm at most $C$.
\end{proof}

Motivated by \eqref{eq:kernel}, for $s\in \C$ with $\Re s >0$ we define 
\begin{align}\label{eq:r} b_s:= \tfrac{1}{8s}(1-s)^2, \quad 
c_s:=\tfrac{1}{4}(\tfrac1s - s).
\end{align}
Setting $$ r_\pm(s):= \frac12\Re(\frac1s\pm s)$$ we have the identities
$ \frac14 + \Re b_s  = \frac14 r_+(s)$ and $\Re c_s = \frac12 r_-(s).$

\begin{theorem}[Restricted $L^p$-$L^q$ boundedness]\label{thm:main}
Let $p,q\in [1, \infty)$, let $\frac{1}{r}=1-(\frac{1}{p}-\frac{1}{q})$, and let 
$\a,\b>0$.
If $s\in \C$ with $\Re s >0$ satisfies $1-\frac{2}{\a p}+ r_+(s)>0$, 
$\frac{2}{\b q}-1 + r_+(s)>0$,  and 
\begin{align}\label{eq:s}
(r_-(s))^2 \le  \bigl(1-\frac{2}{\a p} +  r_+(s)\bigr)\bigl(\frac{2}{\b q}-1+ 
r_+(s)\bigr), 
\end{align}
then the operator
$\exp(-s(P^2+Q^2))$ is bounded from $L^p(\R^d,\gamma_{\a})$ to 
$L^{q}(\R^d,\gamma_{\b})$
with norm 
\begin{align*} \n 
\exp(-s(P^2+Q^2))\n_{\calL(L^p(\R^d,\gamma_{\a}),L^{q}(\R^d,\gamma_{\b}))}
\le  \frac1{(2rs)^{d/2}} \frac{
(\frac{\alpha r}{2})^{d/2p}(\frac{\beta r}{2})^{-d/2q}}
{(1 - \tfrac{2}{\a p}+  r_+(s))^{\frac{d}{2}(1-\frac1p)}(\tfrac{2}{\b q}-1+ 
r_+(s))^{\frac{d}{2}\frac1q}}.
\end{align*}
\end{theorem}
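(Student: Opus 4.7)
The plan is to apply the Schur-type Lemma~\ref{lem:Schur} to the operator $\exp(-s(P^2+Q^2))$, whose kernel has already been computed along the way in the proof of Theorem~\ref{thm:exptL}: by \eqref{eq:kernel}, with respect to Lebesgue measure it equals
\[
K(y,x) = \frac{1}{2^d(2\pi s)^{d/2}}\exp\bigl(-b_s(|x|^2+|y|^2) + c_s\,x\cdot y\bigr)\,e^{-|x|^2/2}.
\]
Boundedness $L^p(\R^d,\gamma_{\alpha})\to L^q(\R^d,\gamma_{\beta})$ is the same as boundedness from $L^p(\R^d,\phi(x)\ud x)$ to $L^q(\R^d,\psi(y)\ud y)$ with $\phi(x) = (2\pi\alpha)^{-d/2}e^{-|x|^2/(2\alpha)}$ and $\psi(y) = (2\pi\beta)^{-d/2}e^{-|y|^2/(2\beta)}$, so the framework of Lemma~\ref{lem:Schur} applies verbatim.

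The first step is to compute $|K(y,x)|^r\psi(y)^{r/q}/\phi(x)^{r/p}$. This is the exponential of a real quadratic form in $(x,y)$; collecting coefficients and using the two identities $\tfrac14+\Re b_s = \tfrac14 r_+(s)$ and $\Re c_s = \tfrac12 r_-(s)$ recorded just before the theorem, I obtain
\[
M_0\exp\bigl(-A|x|^2-B|y|^2+C\,x\cdot y\bigr),
\]
where $A = \tfrac{r}{4}\bigl(1-\tfrac{2}{\alpha p}+r_+(s)\bigr)$, $B = \tfrac{r}{4}\bigl(\tfrac{2}{\beta q}-1+r_+(s)\bigr)$, $C = \tfrac{r}{2}r_-(s)$, and $M_0$ is a constant depending explicitly on $d,p,q,r,\alpha,\beta,s$. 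The first two positivity hypotheses in the theorem say exactly that $A,B>0$.

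Now Lemma~\ref{lem:calc} evaluates $\int_{\R^d}e^{-A|x|^2+Cx\cdot y}\ud x = (\pi/A)^{d/2}e^{C^2|y|^2/(4A)}$; combined with the prefactor $e^{-B|y|^2}$, this is bounded uniformly in $y$ precisely when $C^2\le 4AB$, which, after cancelling $r^2/4$, is the remaining hypothesis~\eqref{eq:s}. Thus $C_1^r\le M_0(\pi/A)^{d/2}$ in the notation of Lemma~\ref{lem:Schur}, and the symmetric argument with $x$ and $y$ interchanged gives $C_2^r\le M_0(\pi/B)^{d/2}$.

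Lemma~\ref{lem:Schur} then yields the norm bound $C_1^{1-r/q}C_2^{r/q}$. The only remaining task, and the main source of tedium, is verifying that the resulting prefactor simplifies to the claimed expression. The key arithmetical identity is $\tfrac1{p'}+\tfrac1q=\tfrac1r$ (equivalent to the definition $\tfrac1r = 1-\tfrac1p+\tfrac1q$), which makes the combined exponents on $A$ and $B$ collapse to $\tfrac{d}{2}(1-\tfrac1p)$ and $\tfrac{d}{2q}$ respectively, as in the theorem. The numerical factors arising from $M_0^{1/r}$ and from the $r/4$ inside $A$ and $B$ then rearrange, using the same identity, into the prefactor $\frac1{(2rs)^{d/2}}(\alpha r/2)^{d/(2p)}(\beta r/2)^{-d/(2q)}$ stated in the theorem.
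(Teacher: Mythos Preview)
Your proposal is correct and follows essentially the same route as the paper's proof: apply the Schur-type Lemma~\ref{lem:Schur} to the explicit kernel $K_{a_s}$, evaluate the Gaussian integrals via Lemma~\ref{lem:calc}, identify the positivity and inequality hypotheses with the conditions $A,B>0$ and $C^2\le 4AB$, and then simplify the resulting constant. The only cosmetic difference is that you parameterise directly in terms of $r_\pm(s)$ where the paper first works with $\Re b_s$ and $\Re c_s$; the computations are otherwise identical.
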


\begin{remark}\label{rem:C}
 We have no reason to believe that the numerical constant 
 $(\frac1{2r})^{d/2}(\frac{\alpha r}{2})^{d/2p}(\frac{\beta r}{2})^{-d/2q}$ is 
sharp, but 
the examples that we are about to work out indicate that the dependence on $s$ 
is of the correct order. 
\end{remark}

\begin{remark}\label{rem:pos} For $s = x+iy\in \C$ with $x>0$ we have $r_+(s) 
= \frac12( \frac{x}{x^2+y^2}+x) >0.$
It follows that the positivity assumptions $1-\frac{2}{\a p}+ r_+(s)>0$ and 
$\frac{2}{\b q}-1+ r_+(s)>0$ are fulfilled for all $\Re s>0$ if, respectively, 
$\a p\ge 2$ and $\b q\le 2$.
\end{remark}

\begin{proof}
Using the notation of \eqref{eq:r}, the condition \eqref{eq:s}
is equivalent to 
\begin{align}\label{eq:s-equiv}(\Re c_s)^2 \le 4\bigl(\frac{1}{2}-\frac{1}{2\a 
p}+ \Re b_s\bigr)\bigl(\frac{1}{2\b q}+\Re b_s\bigr).
\end{align}

We prove the theorem by checking the criterion of Lemma \ref{lem:Schur} 
for $K = K_{a_s}$ with $a_s(x,\xi)= \exp(-s(|x|^{2}+|\xi|^{2}))$, and $\phi(x) = 
(2\pi\alpha)^{-d/2}\exp(-|x|^2/2\a)$, $\psi(x) = 
(2\pi\beta)^{-d/2}\exp(-|x|^2/2\b)$. 

By \eqref{eq:Ka2}, for almost all $x,y \in \R^{d}$ we have 
\begin{align*}
K_{a_s}(y,x) &
= \frac1{2^d(2\pi s)^{d/2}}\exp(-b_s(|x|^2+|y|^2)+c_sxy)\exp(-\tfrac12|x|^2).
\end{align*}
Let $r \in [1,\infty)$ be such that $\frac{1}{r}=1-(\frac{1}{p}-\frac{1}{q})$.
Using Lemma \ref{lem:calc}, applied with $A= r( \tfrac{1}{2} - \tfrac{1}{2\a 
p}+\Re b_s)$ and $B=  r\Re c_s$, 
we may estimate 
\begin{equation}\label{estimate-ultra-schur1a}
\begin{aligned}
\ & \sup_{y \in \R^{d}} \Bigl(\int_{\R^{d}} |K_{a_s}(y,x)|^{r}  
(2\pi\a)^{rd/2p}\exp(\tfrac{1}{p}r 
|x|^{2}/2\a)(2\pi\a)^{-rd/2p}\exp(-\tfrac{1}{q}r|y|^{2}/2\b) \ud x\Bigr)^{1/r}
\\ &  = \frac{(2\pi\a)^{d/2p}(2\pi\b)^{-d/2q}}{2^d(2\pi s)^{d/2}} 
\\ &  \qquad\qquad \times
\sup_{y \in \R^{d}} \Bigl( \int_{\R^{d}} \exp(-r\Re b_s(|x|^2+|y|^2)+r\Re 
c_sxy)\exp(-r(\tfrac12-\tfrac{1}{2\a p})|x|^2) \exp(-\tfrac{1}{2\b 
q}r|y|^{2})\ud x \Bigr)^{1/r}
\\ & = \frac{(2\pi\a)^{d/2p}(2\pi\b)^{-d/2q}}{2^d(2\pi s)^{d/2}} 
\\ &  \qquad\qquad\times
\sup_{y \in \R^{d}}\Bigl[ \exp(-(\tfrac{1}{2\b q}+\Re b_s) |y|^2)
\Bigl( \int_{\R^{d}} 
\exp(-r( \tfrac{1}{2} - \tfrac{1}{2\a p}+\Re b_s ) |x|^2+r\Re c_s xy)\ud x 
\Bigr)^{1/r}\Bigr]
\\ &  = \frac{(2\pi\a)^{d/2p}(2\pi\b)^{-d/2q}}{2^d(2 \pi s)^{d/2}} 
\Bigl(\frac{\pi}{r  ( \tfrac{1}{2} - \tfrac{1}{2\a p}+\Re b_s)  }\Bigr)^{d/2r} 
\\ &  \qquad\qquad \times
\sup_{y \in \R^{d}} \Bigl[\exp(-(\tfrac{1}{2\b q}+\Re b_s) |y|^2)
\exp\Bigl(\frac{(\Re c_s)^{2}}{4( \frac{1}{2} - \tfrac{1}{2\a p}+\Re b_s 
)}|y|^2\Bigr)\Bigr]
\\ &  =  \frac{(2\pi\a)^{d/2p}(2\pi\b)^{-d/2q}}{2^d(2 
\pi)^{d/2}}\Bigl(\frac{\pi}{r}\Bigr)^{d/2r} \frac1{s^{d/2}}  \Bigl(\frac{1}{ 
\tfrac{1}{2} - \tfrac{1}{2\a p}+\Re b_s  }\Bigr)^{d/2r} 
\\ &  = \frac{(2\a)^{d/2p}(2\b)^{-d/2q}}{2^{{3d}/{2}}}\frac{1}{r^{d/2r}} 
\frac1{s^{d/2}}  \Bigl(\frac{1}{ \tfrac{1}{2} - \tfrac{1}{2\a p}+\Re b_s  
}\Bigr)^{d/2r}.
\end{aligned}
\end{equation}

In the same way, using 
Lemma \ref{lem:calc} applied with $A= r(\frac{1}{\b q}+\Re b_s)$ and $B=  r\Re 
c_s$,
\begin{align}\label{estimate-ultra-schur1b}
\sup_{x \in \R^{d}} \Bigl(\int_{\R^{d}} |K_{a_s}(y,x)|^{r}  
\exp(\tfrac{1}{2\a p}r  |x|^{2})\exp(-\tfrac{1}{2\b q}r|y|^{2})\ud y\Bigr)^{1/r}
\!= 
\frac{(2\a)^{d/2p}(2\b)^{-d/2q}}{2^{{3d}/{2}}}\frac{1}{r^{d/2r}}\frac1{s^{d/2}} 
\Bigl(\frac{1}{ \tfrac{1}{2\b q}+\Re b_s }\Bigr)^{d/2r}\!.
\end{align}

Denoting these two bounds by $C_1$ and $C_2$,
Lemma \ref{lem:Schur} bounds the 
norm of the operator
by $C_1^{1-\frac{r}{q}}C_2^{\frac{r}{q}} = 
C_1^{r(1-\frac{1}{p})}C_2^{\frac{r}{q}}$.
After rearranging the various constants a bit, this gives the estimate in the 
statement of the theorem.
\end{proof}

\begin{remark}
In the above proof one could replace the Schur test (Lemma \ref{lem:Schur}) by the  weaker 
condition \eqref{eq:Holder} based on H\"older's inequality. 
This would have the effect of replacing the suprema by integrals throughout the proof. 
This leads not only to sub-optimal estimates, but more importantly it would not allow 
to handle the critical case when \eqref{eq:s} holds with equality. 
\end{remark}

Combining Theorems \ref{thm:exptL} and \ref{thm:main}, we obtain the following 
boundedness result
for the operators $\exp(-zL)$. 

\begin{corollary}
\label{cor:ultbdd}
 Let $s\in \C$ with $\Re s>0$ satisfy the conditions of the theorem
and define $z\in \C$ by 
$ s = \frac{1-e^{-z}}{1+e^{-z}}.$ Then,
\begin{align*}
\ & \n \exp(-zL)\n_{\calL(L^p(\R^d,\gamma_{\a}),L^{q}(\R^d,\gamma_{\b}))} 
\\ & \qquad \le \frac{2^dC}{|1-e^{-2z}|^{\frac{d}{2}}}
\frac1{(1 - \frac{2}{\a p}+ \Re 
\frac{1+e^{-2z}}{1-e^{-2z}})^{\frac{d}{2}(1-\frac1p)}
(\frac{2}{\b q}-1 + \Re \frac{1+e^{-2z}}{1-e^{-2z}})^{\frac{d}{2}\frac1q}},
\end{align*}
where $C$ is the numerical constant in Theorem \ref{thm:main} (cf. Remark 
\ref{rem:C}).
\end{corollary}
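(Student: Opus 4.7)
The plan is to combine Theorem \ref{thm:exptL} in the analytically extended form noted right after its proof with Theorem \ref{thm:main}, and then simplify the resulting constants by translating the $s$-variables back into $z$-variables.

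First, I would invoke the identity $\exp(-zL) = (1+s)^d \exp(-s(P^2+Q^2))$ with $s = \frac{1-e^{-z}}{1+e^{-z}}$, valid for $\Re z > 0$ by analytic continuation of both sides (both sides are analytic in $z$ on $\{\Re z > 0\}$ and agree on $(0,\infty)$ by Theorem \ref{thm:exptL}; the right-hand side is a well-defined bounded operator on $L^2(\R^d,\gamma)$ via the kernel $(1+s)^d K_{a_s}$ from \eqref{eq:Ka2}, which extends holomorphically in $s$ for $\Re s>0$). Taking operator norms gives
\begin{equation*}
\n \exp(-zL)\n_{\calL(L^p(\R^d,\gamma_\a),L^q(\R^d,\gamma_\b))} = |1+s|^d \, \n \exp(-s(P^2+Q^2))\n_{\calL(L^p(\R^d,\gamma_\a),L^q(\R^d,\gamma_\b))}.
\end{equation*}

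Second, I would apply Theorem \ref{thm:main} directly to bound the right-hand factor. This produces the factor $\tfrac{1}{(2rs)^{d/2}}$ together with the two denominators involving $r_+(s)$, all multiplied by the numerical constant $C = \tfrac{1}{(2r)^{d/2}}(\tfrac{\alpha r}{2})^{d/2p}(\tfrac{\beta r}{2})^{-d/2q}$ of Remark \ref{rem:C} --- or rather, the version of it extracted when one pulls out the $s$-dependence.

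Third, the core of the work is a short algebraic simplification. From $s = \frac{1-e^{-z}}{1+e^{-z}}$ one computes
\begin{equation*}
1+s = \frac{2}{1+e^{-z}}, \qquad \frac{1}{s}+s = \frac{2(1+e^{-2z})}{1-e^{-2z}},
\end{equation*}
so that $|1+s|^d \, |s|^{-d/2} = \frac{2^d}{|(1+e^{-z})(1-e^{-z})|^{d/2}} = \frac{2^d}{|1-e^{-2z}|^{d/2}}$ and $r_+(s) = \tfrac12\Re(\tfrac1s + s) = \Re\frac{1+e^{-2z}}{1-e^{-2z}}$. Substituting these two identities into the bound from Theorem \ref{thm:main} reproduces exactly the expression in the statement.

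There is essentially no analytic obstacle here; the only point requiring care is ensuring that the conditions $1-\tfrac{2}{\a p}+r_+(s)>0$ and $\tfrac{2}{\b q}-1+r_+(s)>0$ from Theorem \ref{thm:main} transfer unchanged to the $z$-picture (which they do verbatim via the formula $r_+(s) = \Re\frac{1+e^{-2z}}{1-e^{-2z}}$), and that the bilinear condition \eqref{eq:s} is preserved under the same substitution. The trickiest bit is purely notational: matching the numerical constant $C$ from Remark \ref{rem:C} with the explicit factor written in the corollary, which amounts to collecting the $2^d$ coming from $|1+s|^d/|s|^{d/2}$ and absorbing the remaining $r$- and $(\alpha,\beta)$-powers into $C$.
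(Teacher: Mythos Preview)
Your proposal is correct and follows essentially the same route as the paper: invoke the analytically continued identity $\exp(-zL) = (1+s)^d \exp(-s(P^2+Q^2))$, apply the norm bound from Theorem~\ref{thm:main}, and then substitute $1+s = 2/(1+e^{-z})$ and $r_+(s) = \Re\frac{1+e^{-2z}}{1-e^{-2z}}$ to simplify. The only difference is that you spell out the algebraic verification of $|1+s|^d|s|^{-d/2} = 2^d/|1-e^{-2z}|^{d/2}$ and the preservation of the hypotheses a bit more explicitly than the paper does.
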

\begin{proof}
Noting that $2/(1+e^{-z}) = 1+s$, we have
 \begin{align*}
  \n \exp(-zL)\n 
  & \le |1+s|^d  \n \exp(-s(P^2+Q^2))\n
  \\ & \le C|1+s|^d \frac1{|s|^{d/2}} 
  \frac1{|1 - \frac{2}{\a p}+ r_+(s)|^{\frac{d}{2}(1-\frac1p)}| 
  \frac{2}{\b q}-1 + r_+(s)|^{\frac{d}{2}\frac1q}}.
\end{align*}
The result follows from this by substituting 
$r_+(s) = \frac12 \Re(\frac1s+s) = \Re \frac{1+e^{-2z}}{1-e^{-2z}}$.
\end{proof}

\section{Restricted $L^p$-$L^2$ boundedness and Sobolev embedding}\label{sec:bbg}

As a first application of Theorem \ref{thm:main} we have the following 
`hyperboundedness' result for real times $t>0$:

\begin{corollary}\label{cor:bbg}
For $p \in [1,2]$ and $t>0$ set $\a_{p,t} := (1+e^{-2t})/p.$
\begin{enumerate}
 \item[\rm(1)] For all $t>0$ the operator $\exp(-tL)$ is bounded from 
$L^{1}(\R^d,\gamma_{\a_{1,t}})$ to $L^2(\R^d,\gamma)$, with norm 
$$ \n \exp(-tL)\n_{\calL(L^{1}(\R^d,\gamma_{\a_{1,t}}),L^2(\R^d,\gamma)} \lesssim_{d} 
(1-e^{-4t})^{-d/4}.$$       
 \item[\rm(2)] For all $p \in [1,2]$ and $t>0$ the operator $\exp(-tL)$ is bounded from 
$L^{p}(\R^d,\gamma_{\a_{p,t}})$ to $L^2(\R^d,\gamma)$, with norm 
$$ \n \exp(-tL)\n_{\calL(L^{p}(\R^d,\gamma_{\a_{p,t}}),L^2(\R^d,\gamma)} \lesssim_{d,p} 
t^{-\frac{d}{2}(\frac{1}{p}-\frac{1}{2})} \ \  \hbox{as $t\downarrow 0$}.$$
\end{enumerate}
\end{corollary}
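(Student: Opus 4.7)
The plan is to apply Theorem \ref{thm:main} with $q = 2$ and $\beta = 1$ (so that the factor $\tfrac{2}{\beta q}-1$ vanishes), combined with the time change $s = \tfrac{1-e^{-t}}{1+e^{-t}}$ from Theorem \ref{thm:exptL}. For real $s\in(0,1)$ a direct computation yields $r_+(s) = \tfrac{1+e^{-2t}}{1-e^{-2t}}$ and $r_-(s) = \tfrac{2e^{-t}}{1-e^{-2t}}$, and the elementary identity $r_+(s)^2 - r_-(s)^2 = 1$ will be the algebraic engine that makes the critical case of \eqref{eq:s} computable. The choice $\alpha_{p,t} = (1+e^{-2t})/p$ is engineered so that $\alpha_{p,t}\, p = 1+e^{-2t}$ and \eqref{eq:s} holds with \emph{equality}.

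The verification of the hypotheses goes as follows. With $\alpha p = 1+e^{-2t}$ one gets
\[
1 - \frac{2}{\alpha p} + r_+(s) = -\frac{1-e^{-2t}}{1+e^{-2t}} + \frac{1+e^{-2t}}{1-e^{-2t}} = \frac{4e^{-2t}}{1-e^{-4t}} > 0,
\]
and multiplying by $r_+(s)$ gives
\[
r_+(s)\bigl(1 - \tfrac{2}{\alpha p} + r_+(s)\bigr) = \frac{4e^{-2t}(1+e^{-2t})}{(1-e^{-2t})(1-e^{-4t})} = \frac{4e^{-2t}}{(1-e^{-2t})^2} = r_+(s)^2 - 1 = r_-(s)^2,
\]
so \eqref{eq:s} is an equality and the positivity assumptions of Theorem \ref{thm:main} are automatic. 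Applying Theorem \ref{thm:main} bounds $\|\exp(-s(P^2+Q^2))\|_{L^p(\gamma_{\alpha_{p,t}})\to L^2(\gamma)}$, and multiplying by $(1+s)^d = (2/(1+e^{-t}))^d$ as in \eqref{eq:OU-via-PQ} of Theorem \ref{thm:exptL} converts this into a bound on $\|\exp(-tL)\|_{L^p(\gamma_{\alpha_{p,t}}) \to L^2(\gamma)}$.

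Substituting the explicit values of $r_+(s)$, $1-\tfrac{2}{\alpha p} + r_+(s)$, $(1+s)$, and $s$ into the bound of Theorem \ref{thm:main}, and collecting the powers of $(1-e^{-2t})$ and $(1+e^{-2t})$ into the single factor $(1-e^{-4t})$, will yield
\[
\|\exp(-tL)\|_{L^p(\gamma_{\alpha_{p,t}}) \to L^2(\gamma)} \lesssim_{d,p} e^{dt(1 - 1/p)}\,(1-e^{-4t})^{-d(2-p)/(4p)},
\]
the exponent $-d(2-p)/(4p) = \tfrac{d}{4} - \tfrac{d}{2}(1 - \tfrac{1}{p})$ arising from the clean cancellation of the various powers. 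Specializing to $p=1$ kills the exponential prefactor and leaves exponent $d/4$, giving item (1) uniformly in $t>0$. For general $p\in [1,2]$, as $t\downarrow 0$ the prefactor stays bounded and $(1-e^{-4t})^{-d(2-p)/(4p)} \sim (4t)^{-(d/2)(1/p - 1/2)}$, which is item (2).

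The main obstacle is not conceptual but purely algebraic: spotting that the critical weight is exactly $\alpha_{p,t} = (1+e^{-2t})/p$ so that the Schur test is saturated via $r_+^2 - r_-^2 = 1$, and then patiently tracking the powers of $(1\pm e^{-2t})$ through the formula of Theorem \ref{thm:main}. No further analytic input is needed.
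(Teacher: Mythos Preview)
Your argument is correct and follows exactly the route of the paper: verify that with $\alpha_{p,t}\,p = 1+e^{-2t}$, $q=2$, $\beta=1$ the hypotheses of Theorem~\ref{thm:main} hold with equality in \eqref{eq:s}, then read off the norm bound (the paper packages this via Corollary~\ref{cor:ultbdd}, you do it directly from Theorem~\ref{thm:main}). Your use of the identity $r_+(s)^2 - r_-(s)^2 = 1$ makes the verification of equality particularly clean; one small slip: in your parenthetical you write $-d(2-p)/(4p) = \tfrac{d}{4} - \tfrac{d}{2}(1-\tfrac{1}{p})$, but the right-hand side equals $+d(2-p)/(4p)$ --- this does not affect the argument since the bound you actually use, $(1-e^{-4t})^{-d(2-p)/(4p)}$, is correct.
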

\begin{proof}
Elementary algebra shows that with  $\a_{p,t}  = \frac2p(1+ \frac{2s}{1+s^2}) $
and $s =\frac{1-e^{-t}}{1+e^{-t}}$,
the criterion of Theorem \ref{thm:main} holds for all $t\ge 0$ (with equality in \eqref{eq:s}).
Both norm estimates follow from Corollary \ref{cor:ultbdd},  the fist by taking $p=1$, 
the second by noting that 
$\Re(\frac{1+e^{-2t}}{1-e^{-2t}}) \sim \frac{1}{t}$ for small values of $t$.
\end{proof}

A sharp version of this corollary is due to Bakry, Bolley and Gentil 
\cite[Section 4.2, Eq. (28)]{bbg}, who showed (for $p=1$) the hypercontractivity 
bound 
$$ \n \exp(-tL)\n_{\calL(L^1(\R^d,\gamma_{\a_{1,t}}),L^2(\R^d,\gamma))} \le 
(1-e^{-4t})^{-d/4}.$$
Their proof relies on entirely different techniques which seem not to generalise 
to complex time so easily.

The next corollary gives `ultraboundedness' of the operators $\exp(-zL)$ for 
arbitrary $\Re z>0$ from  
$L^{p}(\R^d,\gamma_{2/p})$ into $L^{2}(\R^d,\gamma)$:

\begin{corollary}\label{cor:ultra}
Let $p \in [1,2]$.
For all $z \in \C$ with $\Re z>0$ the operator $\exp(-zL)$ maps
$L^{p}(\R^d,\gamma_{2/p})$ into $L^{2}(\R^d,\gamma)$. As a consequence, the 
semigroup generated by $-L$ extends to
a strongly continuous
holomorphic semigroup of angle $\frac12\pi$ on $L^{p}(\R^d,\gamma_{2/p})$.
For each $\theta\in (0,\frac12\pi)$ this semigroup is uniformly bounded on the 
sector $\{z\in \C: \, z\not=0,\, |\arg(z)|<\theta\}$.
\end{corollary}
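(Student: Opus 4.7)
The plan is to deduce both halves of the corollary from Theorem~\ref{thm:main} by choosing parameters that make the linear boundary terms vanish. For the $L^p(\R^d,\gamma_{2/p})\to L^2(\R^d,\gamma)$ statement I would take $q=2$, $\alpha = 2/p$, $\beta = 1$; for the self-mapping $L^p(\R^d,\gamma_{2/p})\to L^p(\R^d,\gamma_{2/p})$ I would take $p=q$, $\alpha=\beta = 2/p$. In both cases $1-\tfrac{2}{\alpha p}$ and $\tfrac{2}{\beta q}-1$ are zero, so the hypotheses reduce to $r_+(s)>0$ together with $|r_-(s)| \le r_+(s)$. For $s = x+iy$ with $x>0$ a direct computation gives $r_+(s)^2 - r_-(s)^2 = x^2/(x^2+y^2) > 0$, so both conditions hold on the entire right half-plane $\{\Re s > 0\}$. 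Since the M\"obius map $w \mapsto (1-w)/(1+w)$ carries the open unit disc onto the open right half-plane, and $|e^{-z}|<1$ precisely when $\Re z > 0$, every $z$ with $\Re z > 0$ corresponds via $s = (1-e^{-z})/(1+e^{-z})$ to some $s$ with $\Re s > 0$; the complex-analytic extension of Theorem~\ref{thm:exptL} then delivers both boundedness statements.

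The main obstacle will be showing uniform boundedness of the self-map on sub-sectors $\{|\arg z| < \theta\}$ with $\theta < \tfrac{\pi}{2}$. Setting $w := e^{-2z}$ and using the elementary identity $\Re\tfrac{1+w}{1-w} = (1-|w|^2)/|1-w|^2$, Corollary~\ref{cor:ultbdd} collapses to an estimate of the shape
\begin{equation*}
\|\exp(-zL)\|_{\mathscr{L}(L^p(\R^d,\gamma_{2/p}))} \lesssim_d \Bigl(\frac{|1-e^{-2z}|}{1-|e^{-2z}|^2}\Bigr)^{d/2}.
\end{equation*}
I would verify uniform boundedness of this ratio on each sub-sector by examining its behaviour at the endpoints: as $z\to\infty$ in the sector, $e^{-2z}\to 0$ and the ratio tends to $1$; as $z\to 0$, Taylor expansion yields $|1-e^{-2z}| \sim 2|z|$ and $1-|e^{-2z}|^2 \sim 4\Re z = 4|z|\cos(\arg z)$, so the ratio tends to $1/(2\cos\arg z) \le 1/(2\cos\theta)$. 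Continuity of the (strictly positive) ratio on the closed sector away from $0$ and $\infty$ then handles intermediate $z$.

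Given this uniform bound, the remaining semigroup-theoretic conclusions are standard. Hermite polynomials form a dense subspace of $L^p(\R^d,\gamma_{2/p})$ on which $\exp(-zL)$ acts diagonally with eigenvalues $e^{-nz}$, so $z \mapsto \exp(-zL)f$ is manifestly holomorphic on $\{\Re z > 0\}$ and strongly continuous up to $z = 0$ along any sub-sector for $f$ in this subspace. A Vitali-type argument combined with the uniform sector boundedness extends holomorphicity to all of $L^p(\R^d,\gamma_{2/p})$, and a $3\varepsilon$ argument gives strong continuity at $0$ along any sub-sector $|\arg z| < \theta < \tfrac{\pi}{2}$. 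Together with the semigroup identity $\exp(-(z+w)L) = \exp(-zL)\exp(-wL)$, which is inherited from the $L^2(\gamma)$ calculus and extends by density, these give the bounded holomorphic semigroup of angle $\tfrac{\pi}{2}$.
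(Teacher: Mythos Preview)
Your proof is correct and follows exactly the paper's approach: verifying the hypotheses of Theorem~\ref{thm:main} for the two parameter choices $(\alpha,\beta,q)=(2/p,1,2)$ and $(\alpha,\beta,q)=(2/p,2/p,p)$, then invoking Corollary~\ref{cor:ultbdd}. You supply considerably more detail than the paper does---in particular the explicit check that $r_-(s)^2 \le r_+(s)^2$ via $r_+(s)^2 - r_-(s)^2 = x^2/(x^2+y^2)$, the uniform sector bound for the ratio $|1-e^{-2z}|/(1-|e^{-2z}|^2)$, and the density/Vitali argument for holomorphy---all of which are left implicit in the paper's two-line proof.
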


\begin{proof}
This follows from Corollary \ref{cor:ultbdd} upon realising that the assumptions 
of Theorem \ref{thm:main} are satisfied when $q=2$, $\b=1$ and $\a = \frac2{p}$, 
or 
$q=p$ and $\a = \b = \frac2{p}$.
\end{proof}

A notable consequence of Corollary \ref{cor:ultbdd} is the following (restricted) 
Sobolev embedding result. 
It is interesting because $(I+L)^{-1}$ maps $L^p(\R^d,\gamma)$ into $L^{2}(\R^d, 
\gamma)$ only when $p=2$ (i.e. no full Sobolev embedding theorem holds in the 
Ornstein-Uhlenbeck context).

\begin{corollary}[Restricted Sobolev embedding]
Let $p \in (\frac{2d}{d+2},2]$.
The resolvent $(I+L)^{-1}$ maps $L^p(\R^d,\gamma_{2/p})$ into $L^{2}(\R^d, 
\gamma)$.
\end{corollary}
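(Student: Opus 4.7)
The plan is to write the resolvent as the Laplace transform of the Ornstein-Uhlenbeck semigroup,
\begin{align*}
 (I+L)^{-1} f = \int_0^\infty e^{-t} \exp(-tL) f \ud t,
\end{align*}
and then exploit the short-time smoothing bound from Corollary \ref{cor:bbg}(2) together with the uniform boundedness on the space $L^p(\R^d,\gamma_{2/p})$ furnished by Corollary \ref{cor:ultra}. The representation above is a priori a Bochner integral in $L^p(\R^d,\gamma_{2/p})$, which makes sense thanks to the strongly continuous holomorphic semigroup structure in Corollary \ref{cor:ultra}; the objective is therefore to show that the integrand actually lies in $L^1((0,\infty); L^2(\R^d,\gamma))$.

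For small $t$, Corollary \ref{cor:bbg}(2) provides the bound
\begin{align*}
 \n \exp(-tL) f\n_{L^2(\gamma)} \lesssim_{d,p} t^{-\frac{d}{2}(\frac1p-\frac12)}\, \n f\n_{L^p(\gamma_{\a_{p,t}})}.
\end{align*}
Since $\a_{p,t} = (1+e^{-2t})/p \le 2/p$, a direct comparison of the Gaussian densities yields the continuous embedding $L^p(\R^d,\gamma_{2/p}) \hookrightarrow L^p(\R^d,\gamma_{\a_{p,t}})$ with constant at most $(2/(1+e^{-2t}))^{d/(2p)}$, which stays uniformly bounded (and in fact tends to $1$) as $t\downarrow 0$. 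Substituting this embedding into the above estimate controls the $L^2(\gamma)$-norm of $\exp(-tL)f$ by a multiple of $t^{-\frac{d}{2}(\frac1p-\frac12)} \n f\n_{L^p(\gamma_{2/p})}$ on $(0,1]$. For $t\ge 1$, Corollary \ref{cor:ultra} ensures that $\n \exp(-tL)\n_{\calL(L^p(\gamma_{2/p}),L^2(\gamma))}$ remains bounded (e.g., on the positive real axis).

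Putting these two regimes together,
\begin{align*}
 \n (I+L)^{-1} f\n_{L^2(\gamma)} \lesssim_{d,p} \n f\n_{L^p(\gamma_{2/p})}\Bigl(\int_0^1 t^{-\frac{d}{2}(\frac1p-\frac12)}\ud t + \int_1^\infty e^{-t}\ud t\Bigr).
\end{align*}
The second integral is obviously finite, and the first is finite precisely when $\frac{d}{2}(\frac1p-\frac12) < 1$, which is implied by the hypothesis $p > \frac{2d}{d+2}$ (with room to spare). The main conceptual point to check is the very first step, that the Laplace representation truly produces an element of $L^2(\gamma)$ rather than merely of $L^p(\gamma_{2/p})$; once the integrability in $L^2(\gamma)$ is verified as above, Bochner's theorem upgrades the integral to a genuine $L^2(\gamma)$-valued one, and the inequality gives the required restricted Sobolev embedding.
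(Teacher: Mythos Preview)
Your argument is correct and follows the same strategy as the paper: represent $(I+L)^{-1}$ via the Laplace integral of the semigroup and use the short-time smoothing estimate $\n\exp(-tL)\n_{\calL(L^p(\gamma_{\a_{p,t}}),L^2(\gamma))}\lesssim t^{-\frac{d}{2}(\frac1p-\frac12)}$ to control the integrand. The only differences are cosmetic. The paper writes a single bound $\n\exp(-t(I+L))u\n_{L^2(\gamma)}\lesssim e^{-t}t^{-\frac{d}{2}(\frac1p-\frac12)}\n u\n_{L^p(\gamma_{2/p})}$ for all $t\ge 0$ and integrates directly over $(0,\infty)$, whereas you split into $(0,1]$ and $[1,\infty)$ and invoke Corollary~\ref{cor:ultra} for the large-time piece; your version is in fact slightly more careful, since for $p<2$ the factor $t^{-\frac{d}{2}(\frac1p-\frac12)}$ tends to $0$ while the operator norm does not (it is bounded below by $1$, as one sees by testing on constants). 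You are also explicit about the embedding $L^p(\gamma_{2/p})\hookrightarrow L^p(\gamma_{\a_{p,t}})$, which the paper uses implicitly. The paper works with $u\in L^p(\gamma_{2/p})\cap L^2(\gamma)$ and extends by density, while you argue directly that the Bochner integral converges in $L^2(\gamma)$; both are valid ways to justify the same computation.
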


\begin{proof}
Let $p \in (\frac{2d}{d+2},2]$ and fix $u\in L^p(\R^d,\gamma_{2/p}) \cap 
L^{2}(\R^d, \gamma)$.
Then
$$
\|\exp(-t(I+L))u\|_{L^{2}(\R^d, \gamma)} \lesssim_{d,p}
\|u\|_{L^p(\R^d,\gamma_{2/p})}
\exp(-t)t^{-\frac{d}{2}(\frac{1}{p}-\frac{1}{2})} \quad \forall t \geq 0,
$$
and thus $$\|(I+L)^{-1}u\|_{L^{2}(\R^d, \gamma)} \lesssim_{d,p} 
\|u\|_{L^p(\R^d,\gamma_{2/p})} \int_{0} ^{\infty} 
\exp(-t)t^{-\frac{d}{2}(\frac{1}{p}-\frac{1}{2})} \ud t 
\lesssim \|u\|_{L^p(\R^d,\gamma_{2/p})}, $$
since $p \in (\frac{2d}{d+2},2]$ implies 
$\frac{d}{2}(\frac{1}{p}-\frac{1}{2})<1$.
\end{proof}

\section{$L^p$-$L^q$ Boundedness}\label{sec:Epp}

We now turn to the classical setting of the spaces $L^p(\R^d,\gamma)$, where 
$\gamma$ is the standard Gaussian measure.
For $\a=\b=1$ and $s = x+iy$ the first positivity condition of Theorem \ref{thm:main} takes the form
\begin{align}\label{eq:pq1} 
1-\frac{2}{p} + r_+(s) > 0 &\ \Longleftrightarrow \ 1-\frac2p + \frac12\Bigl(\frac{x}{x^2+y^2} +x\Bigr) > 0
\end{align}
whereas condition \eqref{eq:s} is seen to be equivalent to the condition
\begin{align}\label{eq:pq2}
(p-q)(x +\frac{x}{x^2+y^2}) +pq (\frac{x^2}{x^2+y^2}  -1) +2p + 2q-4 \ge  0.
\end{align}
Let us also observe that if these two conditions hold, together they enforce the second positivity
condition $\frac{2}{q} -1 + r_+(s) > 0$; this is apparent from the representation in \eqref{eq:s}.

As a warm up for the general case, let us first consider real times $t\in (0,1)$ in the $z$-plane, 
which correspond to the values $s = x\in (0,1)$ in the $s$-plane. The conditions \eqref{eq:pq1} and \eqref{eq:pq2}
then reduce to $$(1-\frac2p)x +\frac12(x^2+1) >0$$ and $$ 
(p-q)(x +\frac1{x}) +2p + 2q-4 \ge  0,$$ respectively.
The first condition is automatic. Substituting $x = \frac{1-e^{-t}}{1+e^{-t}}$ in the second and solving for $e^{-t}$, 
assuming $p\le q$ we 
find that it is equivalent to the condition $$e^{-2t}\le \frac{p-1}{q-1}.$$
Thus we recover the boundedness part of Nelson's celebrated hypercontractivity 
result \cite{Nel}. 

Turning to complex time, with some additional effort we also recover the following result due to Weissler \cite{Wei} (see also Epperson 
\cite{Epp} for further refinements), essentially as a Corollary of Theorem \ref{thm:main}.

\begin{theorem}[$L^p$-$L^q$-boundedness of $\exp(-zL)$]\label{thm:LpLq} 
 Let $1< p\le q< \infty$.  
If $z\in \C$ satisfies $\Re z>0$, 
\begin{align}\label{eq:pqz0}
|e^{-z}|^2 < p/q                                   
\end{align}
 and 
\begin{equation}\label{eq:pqz} 
(q-1)|e^{-z}|^4 + (2-p-q) (\Re e^{-z})^2 - (2-p-q+pq)(\Im e^{-z})^2 + p-1 > 0,
\end{equation}
then the operator
$\exp(-zL)$ maps $L^p(\R^d,\gamma)$ into $L^{q}(\R^d,\gamma)$.
\end{theorem}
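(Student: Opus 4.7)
The plan is to apply Theorem~\ref{thm:main} with $\a=\b=1$ to the operator $a_s(Q,P)$, where $s:=\tfrac{1-e^{-z}}{1+e^{-z}}$, and then invoke Theorem~\ref{thm:exptL} (extended to complex $z$) to recover $\exp(-zL)=(1+s)^d a_s(Q,P)$. Writing $\omega:=e^{-z}$ (so $|\omega|<1$), I would first compute
\[
r_+(s)=\frac{1-|\omega|^4}{|1-\omega^2|^2},\qquad r_-(s)=\frac{2(\Re\omega)(1-|\omega|^2)}{|1-\omega^2|^2},
\]
and then multiply through by the positive denominator $|1-\omega^2|^2$. Introducing $\rho:=|\omega|^2$, $\sigma:=\Re(\omega^2)$, and $x:=(\Re\omega)^2=\tfrac12(\rho+\sigma)$, the three hypotheses of Theorem~\ref{thm:main} then read $X>0$, $Y>0$, and $XY\ge pq\,x(1-\rho)^2$, where
\[
X:=(p-1)+(2-p)\sigma-\rho^2,\qquad Y:=1+(q-2)\sigma-(q-1)\rho^2.
\]

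The crux of the argument is the polynomial identity
\[
XY-pq\,x(1-\rho)^2=|1-\omega^2|^2\cdot E(\rho,\sigma),\qquad E(\rho,\sigma):=(p-1)-\tfrac{pq}{2}\rho+(q-1)\rho^2+\tfrac{(p-2)(q-2)}{2}\sigma.
\]
Substituting $\sigma=(\Re\omega)^2-(\Im\omega)^2$ shows that $E(\rho,\sigma)$ is exactly the left-hand side of \eqref{eq:pqz}. I would prove the identity by viewing both sides as quadratics in $\sigma$ with $\rho$ fixed: the $\sigma^2$-coefficients match ($-(p-2)(q-2)$ on both sides), and the value at $\sigma=0$ works out to $(1+\rho^2)\cdot E(\rho,0)$, which one obtains from a single long division of a quartic by a quadratic in $\rho$. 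Granting this identity, condition~(iii) of Theorem~\ref{thm:main} becomes exactly $E\ge 0$, i.e.\ \eqref{eq:pqz}.

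The remaining task is to verify the positivity of $X$ and $Y$. The identity combined with \eqref{eq:pqz} gives $XY>0$ at once, so $X$ and $Y$ have the same sign; and both are positive at $\omega=0$, where $X=p-1>0$ and $Y=1>0$. The hard part will be to argue that this positive sign persists throughout the region $\{|\omega|^2<p/q,\,E(\omega)>0\}$; this is precisely where the extra hypothesis \eqref{eq:pqz0} enters. I plan to handle this by case analysis on the signs of $p-2$ and $q-2$, using that $X$ and $Y$ are each linear, hence monotone, in $\sigma\in[-\rho,\rho]$: their minima at the endpoints factor explicitly, for example $Y|_{\sigma=-\rho}=(1+\rho)(1-(q-1)\rho)$ and $E|_{\sigma=-\rho}=(q-1)(\rho-(p-1))(\rho-\tfrac{1}{q-1})$, and one checks in each case that the constraint $\rho<p/q$ cuts off precisely the wrong-sign components of $\{E>0\}$. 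This yields $X,Y>0$ on the region of interest and completes the application of Theorem~\ref{thm:main}.
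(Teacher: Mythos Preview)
Your overall strategy coincides with the paper's: verify the hypotheses of Theorem~\ref{thm:main} (with $\alpha=\beta=1$) via an algebraic factorisation, then invoke Theorem~\ref{thm:exptL}. Your identity $XY-pq\,x(1-\rho)^2=|1-\omega^2|^2\,E$ is correct and is exactly the paper's factorisation \eqref{eq:z1} rewritten in the variables $\rho=|\omega|^2$, $\sigma=\Re(\omega^2)$ instead of $s=x+iy$; in both versions, condition~\eqref{eq:s} is seen to be equivalent to $E\ge 0$. One small gap in your sketch: matching the $\sigma^2$-coefficient and the value at $\sigma=0$ pins down only two of the three coefficients of a quadratic in $\sigma$; you must also check the linear coefficient (it does match).

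The genuine difference is in the positivity step. The paper avoids your case analysis entirely: it observes that \eqref{eq:pqz0} together with \eqref{eq:pqz} is equivalent to $z\in E_{p,q}$ (via \eqref{eq:epp}), uses the elementary inclusion $E_{p,q}\subseteq E_p$, then translates $z\in E_p$ into $s\in\Sigma_{\phi_p}$ by the biholomorphism of Section~\ref{sec:OU}, and finally applies Lemma~\ref{lem:sect} to get $1-\tfrac{2}{p}+r_+(s)>0$; the second positivity then follows from \eqref{eq:s}. This is short and conceptual but imports the description of $E_{p,q}$ via \eqref{eq:epp}. Your route is more computational but entirely self-contained. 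For the record, your case analysis does go through: if $q\le 2$ one has $Y>0$ on the whole disc $\rho<1$ (from $Y|_{\sigma=\rho}=(1-\rho)(1+(q-1)\rho)$), and if $p\ge 2$ one has $X>0$ likewise; in the remaining case $p<2<q$, $X<0$ forces $\rho>p-1$ and $Y<0$ forces $\rho>\tfrac{1}{q-1}$, while one always has $\max\{p-1,\tfrac{1}{q-1}\}\ge p/q$ (exactly one of the two inequalities $p\gtrless q/(q-1)$ holds), contradicting \eqref{eq:pqz0}. This is the arithmetic content hiding behind your phrase ``$\rho<p/q$ cuts off the wrong-sign components''.
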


Before turning to the proof we make a couple of preliminary observations.
By a simple argument involving quadratic forms (see \cite[page 3]{Epp}), the 
conditions \eqref{eq:pqz0}
and \eqref{eq:pqz} 
taken together are equivalent to the single condition 
\begin{align}\label{eq:epp} (\Im (we^{-z}))^2 + (q-1) 
(\Re (we^{-z}))^2  < (\Im w)^2 + (p-1) (\Re w)^2 \quad \forall w\in\C.
\end{align}
Let us denote the set of all $z\in \C$, $\Re z\ge 0$, 
for which \eqref{eq:epp} holds by $E_{p,q}$. 
The following two facts hold:

\begin{facts} \ 

\begin{itemize}
 \item [$\bullet$] $E_{p,p} = E_p$.
 \item [$\bullet$] $E_{p,q} \subseteq E_p$ and $E_{p,q} \subseteq E_q$.
\end{itemize}
\end{facts}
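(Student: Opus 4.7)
The plan is to rephrase condition \eqref{eq:epp} as a positive-definiteness assertion for a $2\times 2$ real symmetric matrix. Identifying $w=u+iv\in\C$ with $(u,v)^{T}\in\R^{2}$ and regarding multiplication by $\omega=e^{-z}=a+ib$ as the real linear map
\[
A_\omega = \begin{pmatrix} a & -b \\ b & a \end{pmatrix},
\]
condition \eqref{eq:epp} reads $w^{T}(G_p-A_\omega^{T}G_q A_\omega)w>0$ for all $w\ne 0$, where $G_r:=\operatorname{diag}(r-1,1)$ encodes the quadratic form $w\mapsto(r-1)(\Re w)^{2}+(\Im w)^{2}$. Thus $z\in E_{p,q}$ iff the symmetric matrix $M_{p,q}(\omega):=G_p-A_\omega^{T}G_q A_\omega$ is positive definite.

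For Fact 2, I would just exploit monotonicity: since $p\le q$, replacing $(q-1)$ by $(p-1)$ on the left-hand side of \eqref{eq:epp} only shrinks it, while replacing $(p-1)$ by $(q-1)$ on the right-hand side only enlarges it. This immediately gives $E_{p,q}\subseteq E_{p,p}$ and $E_{p,q}\subseteq E_{q,q}$, which, by Fact 1, are the desired $E_{p,q}\subseteq E_p$ and $E_{p,q}\subseteq E_q$. This step is essentially a one-liner.

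For Fact 1, specialising to $q=p$, I would compute
\[
M_{p,p}(\omega)=\begin{pmatrix} (p-1)(1-a^{2})-b^{2} & (p-2)ab \\ (p-2)ab & (1-a^{2})-(p-1)b^{2}\end{pmatrix}
\]
and apply the trace--determinant test. The trace collapses to $p(1-a^{2}-b^{2})=p(1-|\omega|^{2})$, which is automatically positive when $\Re z>0$. The heart of the argument is the determinant: after expanding and regrouping using the identities $(p-1)^{2}+1-2(p-1)=(p-2)^{2}$ and $(1-a^{2})+a^{2}=1$, it simplifies to
\[
\det M_{p,p}(\omega)=(p-1)(1-|\omega|^{2})^{2}-(p-2)^{2}(\Im\omega)^{2}.
\]
Parameterising $\omega=e^{-x}(\cos y-i\sin y)$, so that $1-|\omega|^{2}=2e^{-x}\sinh x$ and $|\Im\omega|=e^{-x}|\sin y|$, positivity of this determinant becomes $2\sqrt{p-1}\,\sinh x>|p-2|\,|\sin y|$, which, since $\tan\phi_p=2\sqrt{p-1}/|p-2|$ by \eqref{eq:theta-p}, is exactly the defining inequality of $E_p$; the degenerate case $p=2$ is covered directly, as then $\det M_{2,2}(\omega)=(1-|\omega|^{2})^{2}$ is positive iff $\Re z>0$, matching $E_{2}=\{\Re z>0\}$.

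The one non-routine step is the determinant simplification: the off-diagonal contribution $-(p-2)^{2}a^{2}b^{2}$ and the cross term $-[(p-1)^{2}+1](1-a^{2})b^{2}$ from the diagonal product look at first glance unrelated, but conspire, through the identities above, to assemble into the clean form $(p-1)(1-|\omega|^{2})^{2}-(p-2)^{2}(\Im\omega)^{2}$. Everything else is mechanical.
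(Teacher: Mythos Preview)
Your argument is correct. For Fact~2, your monotonicity observation is exactly what the paper intends when it says the inclusions are ``an immediate consequence of the assumption $p\le q$''.

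For Fact~1, the paper does not actually give a proof: it states that the identity $E_{p,p}=E_p$ ``is implicit in \cite{Epp, GMMST}, can be proved by elementary means, and is taken for granted.'' Your matrix reformulation via $M_{p,q}(\omega)=G_p-A_\omega^{T}G_q A_\omega$ and the trace--determinant computation supplies precisely such an elementary proof. The determinant simplification you flag as the one non-routine step is correct (the controlling identity is $(p-1)^2+1-2(p-1)=(p-2)^2$), and the translation back to the defining inequality of $E_p$ via $\tan\phi_p=2\sqrt{p-1}/|p-2|$ goes through as stated. One small boundary remark: the paper defines $E_{p,q}$ over $\Re z\ge 0$, not $\Re z>0$; on $\Re z=0$ one has $|\omega|=1$, so the trace of $M_{p,p}(\omega)$ vanishes and positive definiteness fails, while $E_p\cap\{\Re z=0\}=\emptyset$ since $\sinh 0=0$. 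Hence the boundary case is handled trivially and the identification is complete.
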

The first is implicit in \cite{Epp, GMMST}, can be proved by elementary means, and is taken for granted.
The second is an immediate consequence of the assumption $p\le q$.

Let us now start with the proof of Theorem \ref{thm:LpLq}. 
It is useful to dispose of the positivity condition \eqref{eq:pq1} in the form of a lemma;
see also Figure 2.
 
 \begin{center}
 \begin{figure}[ht]
  \includegraphics[scale=0.5]{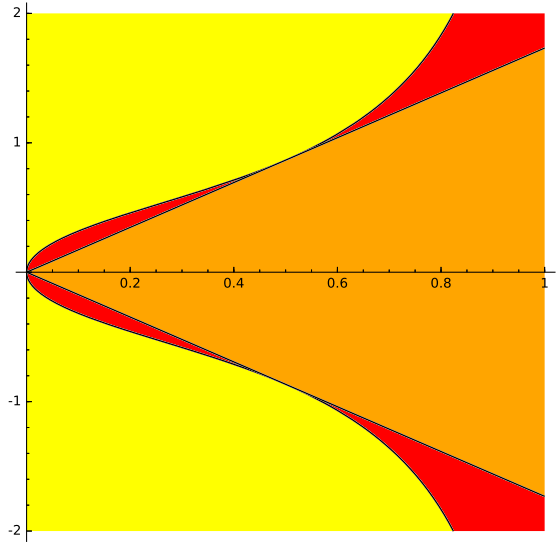}
 
  \caption{
 The region $R_p := \{s\in \C: \ 1-\frac{2}{p} + r_+(s) > 0\}$ (red/orange) 
 and the sector $\Sigma_p$ (orange), both for $p = 4/3$. Lemma \ref{lem:sect}
 implies that $\Sigma_p$ is indeed contained in $R_p$.}
 \end{figure}
 \end{center}


Let $z\in \C$ satisfy $\Re z>0$. 
By the remarks at the end of Section 3, $z$ belongs to $E_p$ if and only if  
$s = \frac{1-e^{-z}}{1+e^{-z}}$ belongs to $\Sigma_{\phi_p}\setminus\{1\}$. 

\begin{lemma}\label{lem:sect} Every $s\in \Sigma_{\phi_p}$ satisfies the positivity condition \eqref{eq:pq1}.
\end{lemma}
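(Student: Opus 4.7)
The plan is to write $r_+(s)$ in polar coordinates and reduce everything to an elementary one-variable inequality. Writing $s = \rho e^{i\varphi}$ with $\rho > 0$ and $|\varphi| < \frac{\pi}{2}$ (since $\Re s > 0$ on $\Sigma_{\phi_p}$ and $\phi_p \le \frac{\pi}{2}$), a direct computation gives
\begin{equation*}
r_+(s) = \frac{1}{2}\Re\!\left(\frac{1}{s} + s\right) = \frac{1}{2}\!\left(\frac{\cos\varphi}{\rho} + \rho\cos\varphi\right) = \frac{(1+\rho^{2})\cos\varphi}{2\rho}.
\end{equation*}

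Next, I would apply the elementary AM--GM inequality $\frac{1+\rho^{2}}{2\rho} \ge 1$ (valid for all $\rho > 0$, with equality iff $\rho = 1$) to conclude that
\begin{equation*}
r_+(s) \ge \cos\varphi = \cos(\arg s).
\end{equation*}
For $s \in \Sigma_{\phi_p}$ we have $|\arg s| < \phi_p$, so $\cos(\arg s) > \cos\phi_p = \bigl|\tfrac{2}{p}-1\bigr|$. Therefore $r_+(s) > \bigl|\tfrac{2}{p}-1\bigr| \ge \tfrac{2}{p}-1$, which is precisely \eqref{eq:pq1}.

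I do not anticipate any real obstacle here; the only thing to check carefully is the case distinction between $p \ge 2$ (where $\frac{2}{p}-1 \le 0$ and the inequality is essentially trivial from positivity of $r_+$) and $p < 2$ (where the absolute value in the definition $\cos\phi_p = |\frac{2}{p}-1|$ is what supplies the needed lower bound). Both cases are handled uniformly by the bound $r_+(s) > |\frac{2}{p}-1|$ obtained above.
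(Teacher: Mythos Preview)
Your proof is correct. Both your argument and the paper's hinge on the sector condition $|\arg s|<\phi_p$, but they exploit it through different elementary manipulations. The paper stays in Cartesian coordinates: it multiplies the inequality $1-\tfrac{2}{p}+\tfrac12(\tfrac{x}{x^2+y^2}+x)>0$ through by $2x>0$, invokes the sector bound in the form $\tfrac{x^2}{x^2+y^2}>(1-\tfrac{2}{p})^2$, and then completes the square to get $(x+(1-\tfrac{2}{p}))^2\ge 0$. You instead pass to polar form and use AM--GM to obtain the clean lower bound $r_+(s)\ge\cos(\arg s)$, after which the sector condition $\cos(\arg s)>\cos\phi_p=|\tfrac{2}{p}-1|$ finishes things immediately. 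Your route is arguably more transparent, since it isolates a reusable pointwise inequality $r_+(s)\ge\cos(\arg s)$ valid for all $s$ with $\Re s>0$, whereas the paper's square-completion is tailored to this particular inequality.
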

\begin{proof}
Writing $s = x+iy$, we then have
 $$ \frac{x^2}{x^2+y^2} = \cos^2\theta_p > (1-\frac2p)^2,$$
where the angle $\theta_p$ is given by \eqref{eq:theta-p}. 
 To see that this implies \eqref{eq:pq1}, note that
\begin{equation}\label{eq:quadr}
 \begin{aligned}
  1-\frac2p + \frac12\Bigl(\frac{x}{x^2+y^2} +x\Bigr) 
   > 2(1-\frac2{p})x +  (1-\frac2p)^2 + x^2 = \Bigl(x+ (1-\frac2p)\Bigr)^2 
   \end{aligned}
\end{equation}
and the latter is trivially true.
\end{proof}

\begin{proof}[Proof of Theorem \ref{thm:LpLq}]
Fix $\Re z>0$ and set $s:= \frac{1-e^{-z}}{1+e^{-z}}$.
We show that the assumptions of the theorem imply the conditions of Theorem 
\ref{thm:main},
 so that $\exp(-s(P^2 + Q^2))$ maps $L^p(\R^d,\gamma)$ into 
$L^{q}(\R^d,\gamma)$.
In combination with Theorem \ref{thm:exptL}, this gives the result.
 
We begin by checking the condition \eqref{eq:pq1}. For this, the second fact tells us that  
there is no loss of generality in assuming that $q=p$. In that situation, the first fact
tells us that $z$ belongs to $E_p$. But then Lemma \ref{lem:sect} gives us the desired result.
 
It remains to check \eqref{eq:pq2}.
Multiplying both sides with $x^2+y^2$, this can be 
rewritten as
\begin{align}\label{eq:sa} (p-q)x(1+x^2+y^2) + pq x^2 - (pq-2p-2q+4)(x^2+y^2) 
\ge 0.
\end{align}
The proof of the theorem is completed by showing that \eqref{eq:pqz} implies \eqref{eq:sa}.

Towards this end, we rewrite \eqref{eq:pqz} in a similar way.  
Setting $e^{-z} = \frac{1-s}{1+s}$ with $s = x+iy$, and using that 
$$ \Re \frac{1-x-iy}{1+x+iy} = \frac{1-(x^2+y^2)}{(1+x)^2+y^2}, \quad 
\Im \frac{1-x-iy}{1+x+iy} = -\frac{2y}{(1+x)^2+y^2},
$$
\eqref{eq:pqz} takes the form 
\begin{align*}  \ & (q-1)((1-(x^2+y^2))^2+4y^2)^2 
+ (2-p-q)(1-(x^2+y^2))^2((1+x)^2+y^2)^2 \\ 
& \qquad\qquad\qquad\qquad
- (2-p-q+pq)4y^2((1+x)^2+y^2)^2+(p-1)((1+x)^2+y^2)^4 >0.
\end{align*}
This factors as 
\begin{equation}\label{eq:z1} \big[4((1+x)^2+y^2)^2\big] 
\times \big[(p-q)x(1+x^2 + y^2)+ (2p+2q-4)x^2   - (pq - 2p - 2q +4)y^2\big].
\end{equation}
Quite miraculously, the second term in straight brackets precisely equals the term in \eqref{eq:sa}.
Since $4((1+x)^2+y^2)^2 >0$ it follows that  \eqref{eq:z1} (and hence \eqref{eq:pqz})
implies \eqref{eq:sa}
(and hence \eqref{eq:s}). 
\end{proof}

It is shown in \cite{Epp} (see also \cite{Jans}) that the operator 
$\exp(-zL)$ is bounded from $L^p(\R^d,\gamma)$ to $L^q(\R^d,\gamma)$ if and only 
if $z\in \overline{E_p}$, and then the operators $\exp(-zL)$ are in fact 
contractions. 
Our proof does not recover 
the contractivity of $\exp(-zL)$. Nevertheless it is
remarkable that the boundedness part does follow from our method, which just uses 
\eqref{eq:exppq}, elementary calculus, the Schur test, and some algebraic manipulations.

For $p=q$, Theorem \ref{thm:LpLq} combined with the fact that $E_{p,p} = E_p$   
contains as a special case that, for a given $z\in \C$ 
with $\Re z>0$, the operator $\exp(-zL)$ is bounded on $L^p(\R^d,\gamma)$ if $z$ belongs 
to ${E_p}$. 
A more direct - and more transparent - proof 
of this fact may be obtained as a consequence of the following theorem.

\begin{theorem}\label{thm:Epp} For all $1<p<\infty$ and $s\in 
\Sigma_{\theta_p}$ 
the operator $\exp(-s(P^2 + Q^2))$ is bounded on $L^p(\R^d,\gamma)$.
\end{theorem}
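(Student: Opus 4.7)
The plan is to deduce Theorem \ref{thm:Epp} directly from Theorem \ref{thm:main} applied with $q = p$ and $\alpha = \beta = 1$. With these choices, the three conditions to verify become: (i) $1 - \tfrac{2}{p} + r_+(s) > 0$, (ii) $\tfrac{2}{p} - 1 + r_+(s) > 0$, and (iii) $(r_-(s))^2 \le (r_+(s))^2 - (1-\tfrac{2}{p})^2$, after factoring the product on the right-hand side of \eqref{eq:s} as a difference of squares. Recalling that $\cos \phi_p = |1 - \tfrac{2}{p}|$, conditions (i) and (ii) together are equivalent to $r_+(s) > \cos \phi_p$, and (iii) is equivalent to $(r_+(s))^2 - (r_-(s))^2 \ge \cos^2 \phi_p$.

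The key step is a direct algebraic computation. Writing $s = x + iy$ with $x > 0$, one finds
\begin{equation*}
r_+(s) + r_-(s) = \Re\tfrac{1}{s} = \tfrac{x}{x^2+y^2}, \qquad r_+(s) - r_-(s) = \Re s = x,
\end{equation*}
and multiplying these gives the clean identity
\begin{equation*}
(r_+(s))^2 - (r_-(s))^2 = \tfrac{x^2}{x^2+y^2} = \cos^2(\arg s).
\end{equation*}
For $s \in \Sigma_{\theta_p}$ we have $|\arg s| < \phi_p$ (identifying $\theta_p$ with $\phi_p$ as in Lemma \ref{lem:sect}), so $\cos^2(\arg s) > \cos^2\phi_p$; this is precisely condition (iii), with strict inequality.

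For the positivity conditions, I would use the representation $r_+(s) = \tfrac{1}{2}\cos(\arg s)\bigl(\tfrac{1}{|s|} + |s|\bigr)$ together with the AM-GM estimate $\tfrac{1}{|s|} + |s| \ge 2$, which yields $r_+(s) \ge \cos(\arg s) > \cos \phi_p$ whenever $s \in \Sigma_{\theta_p}$. This gives (i) and (ii) simultaneously. Combining with Theorem \ref{thm:main} then produces the $L^p(\R^d,\gamma)$-boundedness.

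I expect no serious obstacle: the whole argument reduces to the two identities above plus AM-GM, and the algebraic miracle that the product in \eqref{eq:s} factors as a difference of squares when $\alpha p = \beta q$ does the work. The only subtle point is to note that $s$ lies in the open sector $\Sigma_{\theta_p}$, so strict inequality in $\cos^2(\arg s) > \cos^2\phi_p$ comfortably accommodates the non-strict inequality required in \eqref{eq:s}, and no issue arises at the critical boundary.
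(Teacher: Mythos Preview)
Your proposal is correct and follows essentially the same approach as the paper: both apply Theorem \ref{thm:main} with $q=p$, $\alpha=\beta=1$, and reduce the main inequality \eqref{eq:s} to the sector condition via the identity $(r_+(s))^2-(r_-(s))^2=\cos^2(\arg s)$. The only minor difference is in the treatment of the positivity hypotheses: the paper invokes Lemma \ref{lem:sect} for condition (i) and then observes that (ii) is forced by (i) together with \eqref{eq:s}, whereas you derive (i) and (ii) simultaneously from the clean estimate $r_+(s)=\tfrac12\cos(\arg s)\bigl(|s|^{-1}+|s|\bigr)\ge\cos(\arg s)>\cos\phi_p$ via AM--GM, which is arguably tidier.
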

As we explained in Section \ref{sec:OU}, this result translates into Epperson's 
result that the semigroup $\exp(-tL)$ on $L^p(\R^d,\gamma)$ can be analytically extended to 
to $E_p$. 

\begin{proof}
Lemma \ref{lem:sect} shows that \eqref{eq:pq1} holds. 
Since $q=p$, \eqref{eq:pq2} reduces to the  condition
 $$ p^{2}(\frac{x^{2}}{x^{2}+y^{2}}-1) + 4p - 4 > 0,
 $$ which is equivalent to saying that $s\in \Sigma_{\theta_p}$.
\end{proof}

\begin{remark}
More generally, for an arbitrary pair $(\a,p) 
\in [1,\infty)\times [1,\infty)$ satisfying $\a p > 2$, by the same method 
we obtain that $\exp(-zL)$ is bounded 
on $L^p(\R^d,\gamma_{\a})$ if $s = \frac{1-e^{-z}}{1+e^{-z}}$ satisfies
$$
\frac{\Re s}{|s|} > 1-\frac{2}{\a p}.
$$
This corresponds to the sector of angle 
$\theta_{\a,p}=\arccos(1-\frac{2}{\a p})$ 
in the $s$-plane.
In the $z$-plane, this corresponds to the Epperson region $E_{\alpha p}$.
\end{remark}

{\em Acknowledgment --} We thank Emiel Lorist for generating the figures.

\end{document}